\documentclass{article}

\usepackage{graphicx}
\usepackage{cite}
\usepackage{authblk}
\usepackage{amsthm}
\usepackage{amsmath}
\usepackage{amssymb}
\usepackage{amsfonts}
\usepackage{amscd}
\usepackage{indentfirst}
\usepackage{comment}
\usepackage[left=3cm, right=3cm, lines=45, top=1.0in, bottom=1.0in]{geometry}
\usepackage[colorlinks=true]{hyperref}

\hypersetup{
colorlinks=true,
linkcolor=black,
citecolor=black,
urlcolor=black
}

\date{}
\title{A Local Lewy Theorem for $p$-Harmonic Function with Non-zero Gradient in  $R^3$}
\author{Jiahuan Li, Yilu Liu, Xi-Nan Ma}

\newcommand{\keywords}[1]{\par\quad\textbf{Keywords:} #1}

\numberwithin{equation}{section}

\newtheorem{proposition}{Proposition}[section]
\newtheorem{theorem}[proposition]{Theorem}
\newtheorem{lemma}[proposition]{Lemma}

\DeclareMathOperator{\rank}{rank}
\DeclareMathOperator{\tr}{tr}
\DeclareMathOperator{\sgn}{sgn}
\DeclareMathOperator{\diver}{div}
\DeclareMathOperator{\diag}{diag}

\allowdisplaybreaks[4]

\begin{document}

\maketitle

\begin{abstract}
We establish a local Lewy-type theorem for 
\(p\)-harmonic function  with non-zero gradient  in dimension three space $R^3$. Let \(1<p<\infty\), and let
\(u\in W^{1,p}_{\mathrm{loc}}(\Omega)\) be a weak \(p\)-harmonic function in
a domain \(\Omega\subset\mathbb R^3\), assume it satisfies \(|Du|>0\), we prove that a locally homeomorphic
gradient map \(Du\) must have non-vanishing Hessian determinant. Hence
\(Du\) is a local diffeomorphism. 
\end{abstract}

\keywords{$p$-Laplace equation; gradient homeomorphism; Lewy theorem; partial Legendre transform.}

\section{Introduction}

The study of gradient mappings associated with elliptic equations lies at
the intersection of nonlinear potential theory, geometric analysis, and
topological mapping theory. A basic question is whether the topological
non-degeneracy of a gradient map forces its differential to be
non-degenerate. For a twice differentiable potential $u$, this asks whether
a homeomorphic gradient $Du$ must satisfy $\det D^2u\neq0$. Even for linear
elliptic equations, the answer depends strongly on the dimension and on the
structure of the equation.

In dimension three, Lewy proved that if $u$ is harmonic and $Du$ is a
homeomorphism, then $Du$ is in fact a diffeomorphism \cite{Lewy1968}.
Equivalently, the Hessian determinant of a harmonic function cannot vanish
at a point while its gradient remains locally one-to-one. Lewy's argument is
driven by a maximum principle for the Hessian determinant and the minimal surface theory. Gleason and Wolff
subsequently developed this point of view in higher dimensions and isolated
a robust eigenvalue form of the three-dimensional maximum principle
\cite{GleasonWolff1991}. These results show that a condition expressed in
terms of topology can, for harmonic gradients, enforce a sharp analytic
non-degeneracy conclusion.

The nonlinear analogue considered here is the scalar $p$-Laplace equation.
Its regularity theory was developed through foundational work on degenerate
and quasilinear elliptic equations, including the contributions of Ural'ceva, Uhlenbeck,
Lewis, and Tolksdorf \cite{1968LOMI,Uhlenbeck1977,Lewis1977,Tolksdorf1984}. At points
where the gradient vanishes, the equation is degenerate or singular when
$p\neq2$. Away from the critical set, however, it becomes a uniformly
elliptic equation with analytic coefficients; standard elliptic regularity
then makes the Hessian and its determinant available for a pointwise study.
This non-critical regime is therefore the natural setting in which to seek a
Lewy-type theorem for $p$-harmonic function.

The purpose of this paper is to establish the corresponding local mechanism
for the scalar $p$-Laplace equation in dimension three under the
non-critical hypothesis. Precisely, we study weak solutions
$u\in W^{1,p}_{\mathrm{loc}}(\Omega)$ of
\begin{equation}\label{eq:plaplace}
        \diver\left(|Du|^{p-2}Du\right)=0,
        \qquad 1<p<\infty,
\end{equation}
under the standing hypothesis, imposed on the $C^{1,\alpha}$ representative,
\begin{equation}\label{eq:noncritical}
        |Du|>0.
\end{equation}
By the standard non-critical regularity theorem, $u$ is actually real analytic. Hence
\eqref{eq:plaplace} is classical and, in the non-critical region, equivalent
to the uniformly elliptic analytic non-divergence equation
\begin{equation}\label{eq:nondiv}
        a_{ij}(Du)u_{ij}=0,
        \qquad
        a_{ij}(q)=\delta_{ij}+(p-2)\frac{q_iq_j}{|q|^2}.
\end{equation}

Our main result is the following.
\begin{theorem}\label{thm:main}
Let $\Omega\subset\mathbb R^3$ be a domain, let $1<p<\infty$, and let
$u\in W^{1,p}_{\mathrm{loc}}(\Omega)$ be a weak solution of
\eqref{eq:plaplace}. Assume that  $u$
satisfies
\[
        |Du|>0
\]
in $\Omega$.
If
\[
        Du:\Omega\longrightarrow Du(\Omega)
\]
is a local homeomorphism, then
\[
        \det D^2u(x)\neq 0
        \qquad x\in\Omega.
\]
In particular, $Du$ is a local diffeomorphism. If $Du$ is globally
one-to-one onto its image, then $Du$ is a diffeomorphism onto its image.
\end{theorem}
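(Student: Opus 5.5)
We argue by contradiction, following Lewy's strategy adapted to the non-divergence form \eqref{eq:nondiv}. Suppose $x_0\in\Omega$ with $\det D^2u(x_0)=0$. Since $u$ is real analytic and $Du(x_0)\neq0$, after a rotation we may assume $Du(x_0)=|Du(x_0)|e_3$. We first show that $\rank D^2u(x_0)\ge 2$ cannot fail in a way compatible with the local homeomorphism property, and then rule out rank two. For the rank analysis we use the equation: since $a_{ij}(Du)$ is uniformly positive definite, $\tr(A D^2u)=0$ forces $D^2u$ to be indefinite or zero. If $D^2u(x_0)=0$, then the lowest-order nonzero homogeneous part $P_k$ of $Du-Du(x_0)$ (degree $k\ge2$) is the gradient of a homogeneous polynomial $H$ solving the constant-coefficient equation $a_{ij}(Du(x_0))H_{ij}=0$, i.e.\ a harmonic polynomial after a linear change of variables. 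Then $DH$ is a homogeneous map of degree $k-1\geq1$ that is locally injective near the origin after the standard degree/blow-up argument (local homeomorphisms have local degree $\pm1$, which passes to the leading homogeneous term provided $DH$ has an isolated zero; if it does not, injectivity already fails). A homogeneous gradient of degree $\ge 2$ with degree $\pm1$ is impossible; we plan to invoke the argument of Lewy/Gleason--Wolff for harmonic gradients here, since after the linear change the map is a harmonic gradient.

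The main case is $\rank D^2u(x_0)=2$ with one-dimensional kernel. Here the tool is the partial Legendre transform. Choose coordinates so that the kernel direction is $e_1$, hence $D^2_{(x_2,x_3)}u(x_0)$ is nondegenerate. Perform the partial Legendre transform in the variables $(x_2,x_3)$: set $y=(x_1,u_{x_2},u_{x_3})$ and $u^*(y)=x_2u_{x_2}+x_3u_{x_3}-u$. This is a local analytic diffeomorphism, and in the new variables $Du$ becomes, up to a diffeomorphism, the map $y\mapsto(-u^*_{y_1}, y_2,y_3)$. Thus $Du$ is locally injective if and only if $y_1\mapsto u^*_{y_1}(y_1,y_2,y_3)$ is strictly monotone for each fixed $(y_2,y_3)$, and $\det D^2u=0$ corresponds to $u^*_{11}=0$. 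We will derive the equation satisfied by $u^*$ from \eqref{eq:nondiv}; it is a linear-in-second-derivatives equation whose coefficients depend on $y_2,y_3,u^*_1$ and $D^2 u^*$ in a specific way. Differentiating in $y_1$, we get that $w=u^*_{11}$, or more precisely $v=u^*_1$, satisfies a second-order elliptic equation in $y$ of a special structure: in the harmonic case $p=2$ it reduces to Lewy's observation that the level sets of $v$ in each slice behave like a minimal-surface-type equation.

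The key step, and the expected main obstacle, is a maximum principle: show that if $w=u^*_{11}\ge0$ (monotonicity) and $w(y_0)=0$, then the zero set structure contradicts the equation. We plan to show that $w$ satisfies a linear elliptic equation $L w + b\cdot Dw + c\,w=0$ \emph{modulo} terms that are quadratic in $Dw$ and in the mixed derivatives whose sign is controlled in dimension three, exactly as in Gleason--Wolff's eigenvalue formulation: the $3\times3$ structure ensures that the only bad term is a $2\times2$ determinant which has a favorable sign on the set where $w$ vanishes. Then the strong maximum principle (Hopf) gives $w\equiv0$ near $y_0$, so $u^*_1$ is independent of $y_1$, which contradicts strict monotonicity and hence injectivity of $Du$ on a neighborhood. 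Analyticity lets us also handle the alternative where $w$ has a zero of finite order: a homogeneous leading term of odd degree in $y_1$ would be needed for monotonicity, and we check it is incompatible with the linearized equation at $y_0$. Once $\det D^2u\neq0$ everywhere, the inverse function theorem gives that $Du$ is a local diffeomorphism, and global injectivity makes it a diffeomorphism onto its (open) image.
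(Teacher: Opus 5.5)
Your exclusion of rank one via ellipticity and your rank-two set-up agree with the paper. The rank-zero step, however, rests on a false claim. Local degree $\pm1$ does not obstruct a harmonic gradient that is homogeneous of degree $\ge2$. Take the zonal harmonic $H=z^4-3z^2(x^2+y^2)+\tfrac38(x^2+y^2)^2$: $DH$ is homogeneous of degree $3$ with an isolated zero. The rays on which $DH$ is a positive multiple of $e_1$ pass through $(1,0,0)$, where $\det D^2H=-\tfrac{81}{2}$, and through $(-1,0,\pm\sqrt{3/2})$, where $\det D^2H=2025$. So its local degree is $+1$.

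You also cannot quote Lewy for $DH$, since $DH$ is not known to be injective. Gleason--Wolff needs the eigenvalue hypothesis at every point, and degree does not supply it. What must be transported to the blow-up is the fixed weak sign of $\det D^2u$ from Lemma~\ref{lem:sign}. It forces $\det D^2P_d$ to have a fixed sign. Then Theorem~\ref{thm:LGW}, applied to the rescaled harmonic $Q$ (which has $D^2Q(0)=0$), gives $\det D^2Q\equiv0$. The ternary Gordan--Noether theorem then makes $Q$ a function of two variables, so the leading part of $Du-Du(0)$ is $(\operatorname{Re}(c\zeta^{d-1}),-\operatorname{Im}(c\zeta^{d-1}),0)$.

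This leading part vanishes along a whole line, so it is exactly the non-isolated case you dismiss with ``injectivity already fails''. That dismissal is unjustified in general: $D(x_1^4/4+x_2^6/6+x_3^6/6)=(x_1^3,x_2^5,x_3^5)$ is a homeomorphism whose lowest-order part vanishes on a plane. Here it is the heart of the matter. The paper needs Lemma~\ref{lem:sheetorder}, which follows the $m=d-1\ge2$ sheets of $\{F_1+iF_2=r^me^{i\varphi}\}$ around $\varphi\in[0,2\pi]$ and uses the $C^1$-smallness of the remainder. It shows that the cyclic monodromy of the sheets is incompatible with the linear order of the $F_3$-images along them.

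In the rank-two case your plan has the paper's architecture; your $u^*$ and $y_1$ are the paper's $\phi$ and $s$. It uses the partial Legendre transform, $\det D^2u=-\det B\,w$, a weak sign for $w$, and the strong maximum principle. But the decisive estimate is only announced, and the mechanism you sketch is not what makes it work. After differentiating $\tr(BM)=Aw$ twice in $s$, the dangerous term is $2\tr(BP_sBP_sBM)$ with $P_s=D_y^2\phi_s$. This term is quadratic in third derivatives of $\phi$, not in derivatives of $w$, and it has no sign.

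The paper shows this term is $O(|w|+|Dw|)$ on a full neighbourhood, in three steps:
\begin{itemize}
\item Ellipticity makes $B(0)$ indefinite, so $S=M^{1/2}BM^{1/2}$ has eigenvalues $\lambda,\mu$ of opposite sign with $\lambda+\mu=\tr(BM)=O(|w|)$.
\item Differentiating the equation once gives $\tr(BP_sBM)=O(|w|+|Dw|)$. For $N=M^{-1/2}P_sM^{-1/2}$ this means $\alpha+\gamma=O(|w|+|Dw|)$.
\item Hence $\tr(SNSNS)=\lambda^3\alpha^2+\mu^3\gamma^2+\lambda\mu(\lambda+\mu)\beta^2=O(|w|+|Dw|)$.
\end{itemize}
One must also absorb the mixed terms $w_{\alpha s}$ from $\tr(BM_{ss})$ into the principal part and check that the resulting operator $\mathcal L$ is uniformly elliptic. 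Only then does one get $|\mathcal Lw|\le C(|w|+|Dw|)$. A favourable sign holding merely on $\{w=0\}$, as you propose, gives no differential inequality on a neighbourhood, so Hopf cannot be applied. Your finite-order fallback also needs this same inequality before it can say anything about the leading term of $w$.
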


The proof of the above Theorem~\ref{thm:main} divide theree cases according to the rank of the Hessian matrxi $\{u_{ij}\}$. When the rank is zero,  we use the reduction rsult by Gordan--Noether theorem \cite{WatanabeDeBondt2017}. When the rank of  $\{u_{ij}\}$ is one, it is easy case.  When the rank of  $\{u_{ij}\}$ is two, then we use the partial Legendre transform, which had been developed in Guan\cite{1997advmath}, Guan-Sawyer\cite{2009TAMS} and Rios-Sawyer-Wheeden\cite{AdvMath05}, and we use the maximum principle to get the proof.

The paper is organized as follows. In Section~2 we collect the analytic,
topological, and algebraic facts used throughout the proof, including the
Lewy--Gleason--Wolff maximum principle and a real form of the ternary
Gordan--Noether theorem. We shall use these facts to prove the Theorem~\ref{thm:main} in rank zero case in section 5. Section~3 excludes the rank-one case. Section~4
handles the rank-two case by means of a partial Legendre transform and a
sheet-order obstruction. Section~5 treats the rank-zero case through a
blow-up analysis and the structure of homogeneous harmonic polynomials.
Finally, Section~6 combines the three alternatives and proves
Theorem~\ref{thm:main}.

\textbf{Acknowledgement } The  third author thanks Professor Fanghua Lin for bringing this question to his attention in some years ago. The authors are supported by National Key R\&D Program of China 2025YFA1017603. 

\section{Preliminaries and Some Basic Facts}

We first isolate some basic facts used in the proof.

\begin{theorem}\label{thm:LGW}
Let $Q$ be harmonic in $\Omega$. Suppose that, at every
point, $D^2Q$ has at most one negative eigenvalue. If
\[
        \det D^2Q(x_0)=0
\]
at some point $x_0$, then
\[
        \det D^2Q\equiv 0
\]
in the connected component containing $x_0$.
\end{theorem}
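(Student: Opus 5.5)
We prove the statement with a maximum principle for the Hessian determinant, in the spirit of Lewy and Gleason--Wolff. Set $H=\det D^2Q$. Since $Q$ is harmonic, $\tr D^2Q=0$. Under the hypothesis of at most one negative eigenvalue, the eigenvalues $\lambda_1\le\lambda_2\le\lambda_3$ of $D^2Q$ satisfy $\lambda_2,\lambda_3\ge 0$ and $\lambda_1=-(\lambda_2+\lambda_3)\le 0$. Hence $H=\lambda_1\lambda_2\lambda_3\le 0$ everywhere. So $-H\ge 0$, and $-H$ attains an interior minimum $0$ at $x_0$. The plan is to show that $-H$ (or a suitable power or modification of it) is a supersolution of an elliptic inequality $\Delta(-H)+b\cdot D(-H)+c(-H)\le 0$ with locally bounded coefficients, at least where $H<0$. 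We then argue via the strong minimum principle.

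\textbf{Step 1 (differential identity).} Differentiate $H$ using the cofactor matrix $C=\mathrm{cof}(D^2Q)$. We have $H_k=C^{ij}Q_{ijk}$ and
\[
\Delta H=C^{ij}\Delta Q_{ij}+\sum_k \partial_k C^{ij}\,Q_{ijk}=\sum_k \partial_k C^{ij}\,Q_{ijk},
\]
since $\Delta Q_{ij}=0$. In three dimensions $\partial_k C^{ij}$ is linear in $D^2Q$ and in $D^3Q$, so $\Delta H$ is a quadratic form in the third derivatives with coefficients linear in $D^2Q$. Work at a point in a frame diagonalizing $D^2Q=\diag(\lambda_1,\lambda_2,\lambda_3)$. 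Then
\[
\Delta H=2\sum_k\Big(\sum_{\text{cyclic}}\lambda_i(Q_{jjk}Q_{llk}-Q_{jlk}^2)\Big),
\]
with $\{i,j,l\}=\{1,2,3\}$. The goal is to rewrite this as $\Delta H=$ (terms in $DH$) $+$ (a sign-definite remainder). Use the constraints $\sum_i Q_{iik}=0$ (harmonicity of $Q_k$), and note that $H_k=\sum_i(\lambda_j\lambda_l)Q_{iik}$ lets us eliminate one component of $Q_{iik}$ per $k$.

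\textbf{Step 2 (sign).} Using $\lambda_1=-(\lambda_2+\lambda_3)$, $\lambda_2,\lambda_3\ge0$, we show that modulo gradient terms $\Delta H\ge -c|H|$, or more precisely $\Delta H\ge b\cdot DH$ with $b$ locally bounded wherever $\lambda_2+\lambda_3>0$. This is exactly the Gleason--Wolff eigenvalue form of Lewy's three-dimensional computation: the quadratic form in the mixed third derivatives $Q_{jlk}$ ($j\ne l$) has coefficients $-\lambda_i$, and the at-most-one-negative-eigenvalue condition gives it the right sign. We expect this step to be the main obstacle. One must complete squares carefully, and handle the set where eigenvalues coincide or where $\lambda_2=0$ (the zero set of $H$), since the coefficient $b$ involves division by $\lambda_j\lambda_l$. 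The first thing we would try is to work with $\log(-H)$, or with $-H$ near $x_0$ using the degenerate normalization. Alternatively, we would use the analyticity of $Q$ and only establish the inequality on the open dense set where the eigenvalues are distinct.

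\textbf{Step 3 (conclusion).} $Q$ is harmonic, hence real analytic, so $H$ is analytic. Suppose $H\not\equiv 0$ on the component $U$ containing $x_0$. Then $\{H<0\}$ is open and dense in $U$, and $x_0$ lies on the boundary of a component of it. Apply the Hopf lemma in that component at a boundary point where an interior ball can be placed (choose $x_0$ nearer such a point if necessary). The function $-H>0$ is a supersolution vanishing at the boundary point, so $\partial_\nu(-H)\ne 0$ there. But the boundary point is a minimum of the smooth function $-H\ge0$ on all of $U$, so $D H=0$ there, a contradiction. Hence $H\equiv0$ on $U$.
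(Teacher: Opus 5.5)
The paper does not prove this statement; it quotes it as Theorem~1 of Gleason--Wolff \cite{GleasonWolff1991}. Your outline does not supply a proof either. Step~2 carries all the content, and it is only announced. The mechanism you suggest for it is also not correct. In $\Delta H=2\sum_k\sum_{\mathrm{cyc}}\lambda_i(Q_{jjk}Q_{llk}-Q_{jlk}^2)$ the squares $Q_{23k}^2,Q_{13k}^2,Q_{12k}^2$ carry the coefficients $-\lambda_1\geq0$, $-\lambda_2\leq0$, $-\lambda_3\leq0$, so the eigenvalue hypothesis does not make that form sign-definite. Using $H=\frac13\tr(D^2Q)^3$ for trace-free matrices, one gets in an eigenframe $H_k=\sum_i\lambda_i^2Q_{iik}$ and $\Delta H=2\sum_i\lambda_i\sum_{j,k}Q_{ijk}^2$, which is genuinely indefinite. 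For example, with $D^2Q=\diag(-(a+b),a,b)$ and $D^3Q$ that of $x_1^3-3x_1x_2^2$, one finds $\Delta H=-144b<0$.

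What can be proved is local. Near a zero $y$ of $H$ with $D^2Q(y)\neq0$, write the eigenvalues as $-(a+b),a,b$ with $b$ bounded below. Then $H=-ab(a+b)$ and $H_k=-b(2a+b)Q_{22k}-a(a+2b)Q_{33k}$. Moreover, $\sum_{j,k}(Q_{3jk}^2-Q_{1jk}^2)$ vanishes when all $Q_{22k}=0$. Together these give $\Delta H\geq -C(b^{-1}|DH|+b^{-2}|H|)$, and the strong maximum principle then forces $H\equiv0$ near $y$, hence on the component.

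The genuine gap is at points where $D^2Q$ itself vanishes. For fixed $D^3Q$, the quantities $\Delta H$, $DH$, $H$ are homogeneous of degrees $1,2,3$ in $D^2Q$, and $\Delta H$ can be negative. Hence any pointwise inequality $\Delta H\geq -\beta|DH|-c|H|$ of the kind you aim for in Step~2 must have $\beta$ or $c$ blowing up as $D^2Q\to0$. Your own formulation (``$b$ locally bounded wherever $\lambda_2+\lambda_3>0$'') already indicates this. The Hopf lemma in Step~3 needs coefficients bounded up to the touching point. So the argument only shows that, if $H\not\equiv0$, every zero of $H$ is a zero of $D^2Q$. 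It gives nothing when, say, $D^2Q(x_0)=0$ and $H<0$ on a punctured neighbourhood of $x_0$, since then $x_0$ is the only touching point.

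This degenerate case is exactly the one the paper needs. In Lemma~\ref{lem:rankzeroblowup} the theorem is applied to a homogeneous harmonic $Q$ of degree $d\geq3$ at $x_0=0$, where $D^2Q(0)=0$. Your fallbacks do not close the gap. Superharmonicity of $\log(-H)$ on $\{H<0\}$ yields no minimum principle at zeros: already $\log x_1^2$ is harmonic off $\{x_1=0\}$ although $x_1^2$ vanishes there. Working only on the set where the eigenvalues are distinct does not bound the coefficients at the touching point. Excluding zeros of $H$ at zeros of $D^2Q$ requires an additional argument that your outline does not contain.
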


For dimension three this is precisely the maximum principle behind Lewy's
theorem. In the form stated above it is Theorem 1 of Gleason--Wolff
\cite{GleasonWolff1991}. In the application below, the hypothesis is checked
from the sign of the Hessian determinant. Indeed, if $M$ is a real symmetric
$3\times3$ matrix with
\[
        \tr M=0,
\]
then
\[
        \det M\leq0
        \quad\Longrightarrow\quad
        M\text{ has at most one negative eigenvalue},
\]
whereas
\[
        \det M\geq0
        \quad\Longrightarrow\quad
        -M\text{ has at most one negative eigenvalue}.
\]
This follows immediately from the three real eigenvalues
\[
        \lambda_1+\lambda_2+\lambda_3=0.
\]
Thus, in dimension three, a harmonic function whose Hessian determinant has
a fixed weak sign satisfies the Gleason--Wolff eigenvalue hypothesis after
possibly replacing the function by its negative.

\begin{lemma}\label{lem:sign}
Let $F\in C^\omega(\Omega,\mathbb R^3)$ be a local homeomorphism. Then $\det DF$ has a fixed weak sign in $U$.
Equivalently, either
\[
        \det DF\geq 0
\]
throughout $U$, or
\[
        \det DF\leq 0
\]
throughout $U$.
\end{lemma}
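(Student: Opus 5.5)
The plan is to combine local degree theory with real analyticity. Here $U$ is read as a connected open subset of $\Omega$ (e.g.\ $\Omega$ itself, which is a domain). Set $J=\det DF$, which is real analytic. If $J\equiv 0$ on $U$, both alternatives hold and there is nothing to prove. Otherwise the zero set $Z=\{J=0\}$ is a proper analytic subvariety of the connected set $U$, so it is closed with empty interior. The argument will show that $Z$ cannot separate points where $J>0$ from points where $J<0$, because the local degree of $F$ is constant and equal to $\pm1$.

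\textbf{Step 1 (local degree).} For $x\in U$, choose a small ball $B\ni x$ such that $F|_{\overline B}$ is a homeomorphism onto its image. The local degree $\deg(F,B,F(x))$ is then $\pm1$. It is locally constant in $x$: for nearby $y\in B$ one can use the same ball $B$, and the degree is constant as the target point varies within the open connected set $F(B)$. Since $U$ is connected, $\deg$ takes a single value $\varepsilon\in\{\pm1\}$ throughout $U$.

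\textbf{Step 2 (degree at a regular point).} If $J(x)\neq 0$, the inverse function theorem shows that $F$ is a diffeomorphism near $x$, and the degree there equals $\sgn J(x)$. Hence $\sgn J(x)=\varepsilon$ at every point with $J(x)\neq0$. This gives $\varepsilon J\geq 0$ on all of $U$, which is the claim.

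The main point to check is Step 1, namely that the local index of a local homeomorphism is locally constant. This is standard: take a ball $B$ around $x$ on which $F$ is injective. For $y\in B$, excision shows that the local index at $y$ equals $\deg(F,B,F(y))$. That degree depends only on the component of $\mathbb R^3\setminus F(\partial B)$ containing $F(y)$, and $F(B)$ lies in a single such component. An alternative route avoids analyticity altogether, but analyticity is not needed beyond the continuity of $J$ anyway. Step 2 uses only that $J$ is continuous and that $\det DF$ determines the index at nondegenerate points.
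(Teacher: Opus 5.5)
Your argument is correct and is essentially the paper's proof. The local degree of $F$ is $\pm1$ and constant on the connected domain, and at any point with $\det DF\neq0$ it equals $\sgn\det DF$, which forces $\varepsilon\det DF\geq0$ everywhere. The paper also shows, via the area formula, that regular points are dense; as you observe, this is not needed for the weak-sign conclusion, since points with $\det DF=0$ satisfy it trivially.
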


\begin{proof}
A local homeomorphism between oriented connected $3$-manifolds has a fixed
orientation character. Equivalently, the local topological degree
\[
        \deg(F,B_x,F(x))
\]
is independent of $x$, after the ball $B_x$ is chosen sufficiently small.
This degree is either $1$ or $-1$; see, for example,
\cite[Chapter 2]{Hatcher2002}. At a regular point it agrees with
$\sgn\det DF$. Hence all regular points have the same Jacobian sign.

It remains only to know that regular points are dense. If $\det DF$ vanished
on a non-empty open ball $B$, then the area formula for the locally Lipschitz
map $F$ would give
\[
        |F(B)|\leq \int_B |\det DF|\,dx=0;
\]
see \cite[Section 3.4]{EvansGariepy1992}. This contradicts the fact that a
local homeomorphism is an open map. Thus the analytic function $\det DF$
does not vanish identically on any open ball, and its non-zero set is dense.
Continuity gives the asserted weak sign.
\end{proof}

\begin{theorem}\label{thm:Hesse}
Let $Q$ be a real homogeneous polynomial in three variables, of degree at
least three. If
\[
        \det D^2Q\equiv 0,
\]
then, after an invertible real linear change of variables,
\[
        Q(X)=Q_0(X_1,X_2)
\]
for a homogeneous polynomial $Q_0$ in two variables.
\end{theorem}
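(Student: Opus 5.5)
This is the ternary case of the Gordan--Noether theorem, and I would prove it by the classical route, working first over $\mathbb C$ and then descending to $\mathbb R$.

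\textbf{Reduction to vanishing partial derivatives.} Extend $Q$ to a complex form of degree $d\ge 3$. Consider the gradient (polar) map
\[
\nabla Q:\mathbb C^3\to\mathbb C^3 .
\]
Its Jacobian is $D^2Q$. Since $\det D^2Q\equiv0$, the partials $Q_1,Q_2,Q_3$ are algebraically dependent over $\mathbb C$, so the image of $\nabla Q$ has dimension at most $2$. The goal of this step is a \emph{linear} dependence among the partials, i.e. a nonzero vector $c\in\mathbb C^3$ with
\[
\sum_i c_iQ_i\equiv 0 .
\]
Then $Q$ is constant along the direction $c$, and after a complex linear change it depends on two variables.

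\textbf{Obtaining the linear relation.} This is the Gordan--Noether step and the main obstacle; the rest is bookkeeping. There are two cases for the closure $Z$ of the image of $\nabla Q$, which is a cone.
\begin{itemize}
\item If $Z$ has dimension at most $1$, then $Z$ is a line through the origin (or a point), so a linear relation among the $Q_i$ is immediate.
\item If $Z$ is an irreducible surface cone, its projectivization is a plane curve $C$.
\end{itemize}
In the surface case, the classical argument (Gordan--Noether, as in \cite{WatanabeDeBondt2017}) runs as follows. Take a minimal-degree irreducible relation $h(Q_1,Q_2,Q_3)=0$. Set $g_i=(\partial_ih)(\nabla Q)$, not all identically zero by minimality. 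Differentiating the relation $h(\nabla Q)\equiv0$ gives
\[
D^2Q\, g=0 .
\]
By Euler's relation, $Q_i$ along the vector field $g$ gives $\sum_j g_jQ_{ij}=0$, so the vector field $\sum_j g_j\partial_j$ annihilates every $Q_i$. Consequently its flow preserves the fibres of $\nabla Q$. One then shows that $[g]$, a point of the dual curve of $C$, is constant. Rather than reprove this, I would invoke the ternary Gordan--Noether theorem as cited in the paper. The key fact is that for $n=3$, a plane curve whose dual map is "constant" along the fibres must be a line. This forces $Z$ to be a plane, which is exactly the linear relation we want.

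\textbf{Descending to $\mathbb R$.} The space of complex vectors $c$ with $\sum_i c_iQ_i\equiv0$ is a nonzero subspace $V$ defined over $\mathbb R$, because the $Q_i$ have real coefficients. Hence $V$ is invariant under complex conjugation and contains a nonzero real vector, for instance $\operatorname{Re}c$ or $\operatorname{Im}c$. Choose real coordinates $X_1,X_2,X_3$ with this real vector as the $X_3$-direction. Then $\partial_{X_3}Q\equiv0$, so $Q=Q_0(X_1,X_2)$ with $Q_0$ real and homogeneous of degree $d$.
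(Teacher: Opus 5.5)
Your proposal is correct and follows the paper's own route. You complexify, invoke the ternary Gordan--Noether theorem (in the Watanabe--de Bondt form) to get a nonzero $c\in\mathbb C^3$ with $c\cdot DQ\equiv0$, use the real coefficients of $Q$ to pass to $\operatorname{Re}c$ or $\operatorname{Im}c$, and take that real vector as the $X_3$-direction. Your sketch of the Gordan--Noether step itself is only heuristic: for instance, $\sum_j g_jQ_{ij}=0$ is just $D^2Q\,g=0$, not Euler's relation. Since you, like the paper, ultimately rest that step on the cited theorem, this does not affect the argument.
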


\begin{proof}
We spell out the real form used here. Regard $Q$ as a polynomial
$Q_{\mathbb C}$ over $\mathbb C$. The identity
\[
        \det D^2Q\equiv0
\]
remains true after complexification. By the ternary case of the
Gordan--Noether theorem, in the precise form of
\cite[Theorem 5.3]{WatanabeDeBondt2017}, a variable can be eliminated from
$Q_{\mathbb C}$ by a complex linear change of variables. Thus there is
$B\in GL(3,\mathbb C)$ such that
\[
        \widetilde Q(Y)=Q_{\mathbb C}(BY)
\]
is independent of $Y_3$. Differentiating in $Y_3$ gives
\[
        (Be_3)\cdot DQ_{\mathbb C}(BY)=0.
\]
Since $B$ is invertible, this is equivalent to the polynomial identity
\[
        a\cdot DQ_{\mathbb C}(X)=0,
        \qquad
        a=Be_3\neq0 .
\]
Write
\[
        a=b+ic,
        \qquad b,c\in\mathbb R^3.
\]
Because $Q$ has real coefficients, the real and imaginary parts give
\[
        b\cdot DQ=0,
        \qquad
        c\cdot DQ=0.
\]
At least one of $b,c$ is non-zero; call it $v$. Choose a real invertible
matrix $T$ whose third column is $v$, and set
\[
        \widehat Q(Y)=Q(TY).
\]
Then
\[
        \partial_{Y_3}\widehat Q(Y)
        =
        v\cdot DQ(TY)
        =
        0.
\]
Hence $\widehat Q$ is independent of $Y_3$, which is the asserted real
linear reduction. The classical source is Gordan--Noether
\cite{GordanNoether1876}; for the projective geometric formulation of the
same Hesse claim in low dimension, see also \cite{GarbagnatiRepetto2008}.
\end{proof}

\begin{lemma}\label{lem:sheetorder}
Let $m\geq 2$ and let $F=(F_1,F_2,F_3)$ be continuous. Write
\[
        \zeta=\xi+i\eta,\qquad t=x_3,\qquad
        P=F_1+iF_2,\qquad Z=F_3 .
\]
Assume that $P$ is $C^1$ in a punctured neighbourhood of $0$. Suppose that,
after invertible linear changes in the domain and target,
\begin{equation}\label{eq:coneleading}
        P(\zeta,t)=c\zeta^m+R(\zeta,t),
        \qquad c\neq0,
\end{equation}
where
\begin{align}\label{estiR}
     |R(x)|=O(|x|^{m+1}),
        \qquad
        |DR(x)|=O(|x|^m).
\end{align}
  
Then such an $F$ cannot be one-to-one in any neighbourhood of $0$.
\end{lemma}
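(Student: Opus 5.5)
We argue by contradiction, using a winding-number count for the planar component $P$ on the slice $t=0$. Assume $F$ is one-to-one on a small ball $B_\rho$ around $0$, and work in the normalized coordinates of \eqref{eq:coneleading}. Invertible linear changes in domain and target preserve injectivity, so nothing is lost.

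\textbf{Step 1: the horizontal slice is an $m$-fold cover.} Fix a small $r>0$ and the circle $\gamma_r=\{(\zeta,0):|\zeta|=r\}$. By \eqref{estiR}, $|P-c\zeta^m|=O(r^{m+1})<|c|r^m$ on $\gamma_r$ for $r$ small. Hence $P\neq0$ on $\gamma_r$, and the winding number of $P\circ\gamma_r$ about $0$ equals that of $c\zeta^m$, namely $m\geq2$. Applying the same comparison on the closed disc $\{|\zeta|\le r,\ t=0\}$, we get
\[
|P(\zeta,0)-c\zeta^m|\le C|\zeta|^{m+1}\le \tfrac12|c||\zeta|^m
\]
for small $|\zeta|$. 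So $P$ vanishes on the slice only at $\zeta=0$.

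\textbf{Step 2: locate $m$ points with the same $P$-value.} Fix $w\neq0$ small, say $|w|=\tfrac{|c|}{2}r^m$ scaled appropriately. The equation $c\zeta^m=w$ has $m$ simple roots $\zeta_1,\dots,\zeta_m$ of modulus $\sim r$, separated by angles $2\pi/m$. Around each root take a disc $D_k$ of radius $\delta r$, with $\delta$ small and fixed. On $\partial D_k$ we have $|c\zeta^m-w|\gtrsim \delta r^m$, while $|R|=O(r^{m+1})$. A Rouché/degree argument then shows that $P(\cdot,0)=w$ has a solution in each $D_k$. This uses only continuity of $P$ plus the degree, which is fine since $P$ is $C^1$ off $0$. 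The $C^1$ bound $|DR|=O(|x|^m)$ is used to show that $D_\zeta P$ is nondegenerate near each $\zeta_k$: $|D(c\zeta^m)|\sim |c|m r^{m-1}$ dominates $O(r^m)$, so each solution is unique in $D_k$ and is a transversal zero of $P-w$. This gives $m\geq2$ distinct points $a_k=(\zeta_k',0)$ with $P(a_k)=w$.

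\textbf{Step 3: fold in the third component $Z$.} This is the main obstacle. Injectivity of $F$ only forces the values $Z(a_k)$ to be distinct, and that is not yet a contradiction. To get one, I would vary $t$. The level set $\{P=w\}$ near each $a_k$ is a $C^1$ curve transversal to the slice: since $D_\zeta P$ is invertible there, the implicit function theorem writes it as $\zeta=\zeta_k(t)$ for $|t|\lesssim \epsilon r$, with $|\zeta_k'(t)|=O(r^m/r^{m-1})=O(r)$ small relative to $r$. More globally, the whole set $\{P=w\}\cap B_{2r}$ consists of $m$ disjoint arcs $\Gamma_k$. On each arc, $Z$ restricted to $\Gamma_k$ must be injective, hence strictly monotone in $t$. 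The arcs are disjoint, yet their $Z$-images must also be pairwise disjoint.

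To close the argument, I would use invariance of domain and a degree count. Consider the map $F$ on the solid cylinder $\mathcal C=\{|\zeta|\le r,\ |t|\le \epsilon r\}$. Over a point $(w,z)$ with $z$ in the common range of $Z$ on the central parts of the arcs, the local degree of $F$ sums over preimages. Each arc contributes a preimage with the same orientation sign, since orientation is constant for an injective continuous map and equals the sign of the $P$-winding times the monotonicity of $Z$. Alternatively and more robustly, I would show that the images $Z(\Gamma_k)$ all contain a common interval around $Z(0)$. Their endpoints are controlled as $t\to\pm\epsilon r$ because $Z(x)-Z(0)$ is continuous and the arcs pass within $O(r)$ of the $t$-axis. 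Securing this uniform overlap, by choosing $\epsilon$ and $r$ with continuity of $Z$ at $0$ ensuring $\operatorname{osc} Z$ on $B_{2r}$ is small compared to its $t$-variation along arcs, is the delicate point. If that fails, the fallback is the topological argument: $\deg(F,\mathcal C,(w,z))$ would equal $m\geq2$ by homotopy to $(c\zeta^m,\, t)$. But an injective map has degree $\pm1$, which is a contradiction.
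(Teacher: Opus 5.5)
Your Steps 1--2 and the first half of Step 3 are essentially the paper's set-up. For $|w|\sim r^m$, the fibre $P^{-1}(w)$ in a small cylinder consists of $m$ disjoint $C^1$ arcs $\Gamma_k$ crossing the $t$-slices. $Z$ is injective, hence monotone, on each arc, and injectivity of $F$ makes the intervals $Z(\Gamma_k)$ pairwise disjoint.

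The gap is the closing step, and neither of your two routes can work. $Z$ is only assumed continuous, and no argument that looks at a single fibre can give a contradiction. For example, on $\bigcup_k D_k\times[-\epsilon r,\epsilon r]$ (your discs around the roots of $c\zeta^m=w$, a neighbourhood of the arcs), the choice $Z=t+3k\epsilon r$ on the $k$th piece makes $(P,Z)$ injective.

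So your ``common interval'' claim cannot come from continuity of $Z$ at $0$. The comparison you propose is in fact impossible: the arcs lie in $B_{2r}$, so the $t$-variation of $Z$ along them is at most $\operatorname{osc}_{B_{2r}}Z$.

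The degree fallback fails for the same reason. The homotopy $\bigl(P,(1-\lambda)Z+\lambda t\bigr)$ is admissible only if $(1-\lambda)Z+\lambda t\neq z$ at the points of $P^{-1}(w)$ on the top and bottom faces of $\mathcal C$, and nothing controls this. Under your injectivity hypothesis, $\deg(F,\mathcal C,(w,z))$ is just the signed number of arcs $\Gamma_k$ with $z$ in the interior of $Z(\Gamma_k)$. So ``degree $m$'' is equivalent to the overlap claim, i.e.\ to the non-injectivity you are trying to prove.

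The missing idea is monodromy in $w$. Solve $P=\rho e^{i\varphi}$ uniformly in $\varphi\in[0,2\pi]$ over a fixed $t$-interval. The paper does this with a uniform degree/implicit function argument for $P(rse^{i\theta},r\tau)=r^me^{i\varphi}$. Then the arcs, and hence the endpoints of the intervals $I_k(\varphi)=Z(\Gamma_k(\varphi))$, depend continuously on $\varphi$. Pairwise disjoint intervals with continuously moving endpoints keep their left-to-right order, so the order is the same at $\varphi=0$ and $\varphi=2\pi$. But going once around the circle permutes the arcs cyclically: $I_k(2\pi)=I_{k+1}(0)$, indices mod $m$. A non-trivial cyclic permutation of $m\geq2$ disjoint intervals cannot preserve their linear order. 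This contradiction uses the whole circle of values $w$, which is exactly what a single-fibre or fixed-$w$ degree count misses.
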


\begin{proof}
Multiplying the first two target coordinates by a non-zero complex constant
and rotating the $(\xi,\eta)$-plane, we may assume $c=1$. Arguing by
contradiction, assume that $F$ is one-to-one in a ball about the origin.
Choose
\[
        0<\varepsilon<\frac{\pi}{4m}.
\]
For $r>0$, put
\[
        \xi+i\eta=rs e^{i\theta},
        \qquad
        t=r\tau,
\]
and consider
\[
        1-\varepsilon\leq s\leq1+\varepsilon,
        \qquad
        |\tau|\leq\varepsilon .
\]
After rescaling,
\[
        r^{-m}P(rs e^{i\theta},r\tau)
        =
        s^m e^{im\theta}+E_r(s,\theta,\tau),
\]
where
\[
        \|E_r\|_{C^1}\longrightarrow0
\]
on the compact set above as $r\downarrow0$ because of the assumption \eqref{estiR}.

We now prove the monodromy statement on the universal cover of the angular
variable. For $j=0,\ldots,m-1$ define
\[
        \theta=\frac{\varphi+2\pi j}{m}+\alpha
\]
and
\[
        \mathcal G_{r,j}(s,\alpha,\varphi,\tau)
        =
        e^{-i\varphi}r^{-m}
        P\left(
        rs e^{i((\varphi+2\pi j)/m+\alpha)},r\tau
        \right)-1 
\]
for
\[
        |s-1|\leq\varepsilon,\qquad
        |\alpha|\leq\varepsilon,\qquad
        0\leq\varphi\leq2\pi,\qquad
        |\tau|\leq\varepsilon .
\]

We now make the preceding use of the implicit function theorem uniform.  Put
\[
        K=\{(s,\alpha): |s-1|\leq \varepsilon,\ |\alpha|\leq \varepsilon\}
\]
and
\[
        g(s,\alpha)=s^m e^{im\alpha}-1,
        \qquad g:K\subset \mathbb R^2\longrightarrow \mathbb C\simeq\mathbb R^2 .
\]
Since \(0<\varepsilon<\frac{\pi}{4m}\) and \(s>0\) on \(K\), the equation
\[
        g(s,\alpha)=0
\]
has the unique solution \((s,\alpha)=(1,0)\) in \(K\).  Moreover
\[
        D_{(s,\alpha)}g(1,0)(\dot s,\dot\alpha)
        =m\dot s+im\dot\alpha
\]
is an isomorphism from \(\mathbb R^2\) to \(\mathbb R^2\).  Choose
\(\delta>0\) small enough, such that
\[
        \overline{B_\delta(1,0)}\subset \operatorname{int}K
\]
and
\[
        \det D_{(s,\alpha)}g>0
\]
on \(\overline{B_\delta(1,0)}\).  Since \(g\) has no zero on
\(K\setminus B_\delta(1,0)\), compactness of $\Omega$ gives
\[
        \mu:=\min_{K\setminus B_\delta(1,0)} |g|>0 .
\]

By the uniform \(C^1\)-convergence
\[
        G_{r,j}(s,\alpha,\phi,\tau)
        =g(s,\alpha)+o_{C^1}(1),
\]
uniformly for
\[
        (s,\alpha)\in K,\qquad 0\leq \phi\leq 2\pi,
        \qquad |\tau|\leq \varepsilon,
        \qquad j=0,\ldots,m-1,
\]
we may take \(r>0\) sufficiently small so that
\[
        |G_{r,j}-g|<\frac{\mu}{2}
\]
on \(K\setminus B_\delta(1,0)\), uniformly in \((\phi,\tau,j)\), and also
\[
        \det D_{(s,\alpha)}G_{r,j}>0
\]
on \(\overline{B_\delta(1,0)}\), uniformly in \((\phi,\tau,j)\).  Hence
\(G_{r,j}\) has no zero in \(K\setminus B_\delta(1,0)\).

It remains to see that there is exactly one zero inside \(B_\delta(1,0)\).
On \(\partial B_\delta(1,0)\), the homotopy
\[
        H_\lambda=(1-\lambda)g+\lambda G_{r,j},
        \qquad 0\leq \lambda\leq 1,
\]
does not vanish, again for \(r\) sufficiently small.  Therefore the Brouwer
degree is preserved:
\[
        \deg\bigl(G_{r,j}(\cdot,\cdot,\phi,\tau),
        B_\delta(1,0),0\bigr)
        =
        \deg\bigl(g,B_\delta(1,0),0\bigr)
        =1 .
\]
Since \(\det D_{(s,\alpha)}G_{r,j}>0\) in \(B_\delta(1,0)\), every zero of
\(G_{r,j}\) in this ball contributes local degree \(+1\).  Consequently
there is exactly one zero in \(B_\delta(1,0)\), and hence exactly one zero
in \(K\).

We denote this unique zero by
\[
        (s_j(\phi,\tau),\alpha_j(\phi,\tau)).
\]
At this zero the derivative \(D_{(s,\alpha)}G_{r,j}\) is invertible, so the
implicit function theorem with parameters implies that \(s_j\) and
\(\alpha_j\) depend \(C^1\)-smoothly on \((\phi,\tau)\).  Thus, for all
sufficiently small \(r\), for each \(j=0,\ldots,m-1\) and each
\((\phi,\tau)\in[0,2\pi]\times[-\varepsilon,\varepsilon]\), there is a
unique solution
\[
        (s_j(\phi,\tau),\alpha_j(\phi,\tau))
\]
with
\[
        |s_j(\phi,\tau)-1|<\varepsilon,
        \qquad
        |\alpha_j(\phi,\tau)|<\varepsilon .
\]
Setting
\[
        \theta_j(\phi,\tau)
        =
        \frac{\phi+2\pi j}{m}
        +\alpha_j(\phi,\tau),
\]
we obtain
\[
        P\bigl(r s_j(\phi,\tau)e^{i\theta_j(\phi,\tau)},r\tau\bigr)
        =
        r^m e^{i\phi}.
\]

The
functions $s_j,\alpha_j$ are $C^1$ in $(\varphi,\tau)$, and
\[
        \theta_j(\varphi,\tau)
        =
        \frac{\varphi+2\pi j}{m}+\alpha_j(\varphi,\tau)
\]
solves
\begin{equation}\label{eq:sheet-equation}
        P(rs e^{i\theta},r\tau)=r^m e^{i\varphi}
\end{equation}
in the $j$th lifted angular sector.

This construction gives the desired monodromy exactly. Indeed, for
$j=0,\ldots,m-1$, with indices taken modulo $m$,
\[
        \mathcal G_{r,j}(s,\alpha,2\pi,\tau)
        =
        \mathcal G_{r,j+1}(s,\alpha,0,\tau).
\]
By the uniqueness just proved,
\[
        s_j(2\pi,\tau)=s_{j+1}(0,\tau),
        \qquad
        \alpha_j(2\pi,\tau)=\alpha_{j+1}(0,\tau).
\]
Consequently the $j$th sheet at $\varphi=2\pi$ is the $(j+1)$st sheet at
$\varphi=0$, modulo the harmless $2\pi$ ambiguity of the angular variable.

For fixed \(\phi\), put
\[
        x_j^\phi(\tau)
        =
        \bigl(
        r s_j(\phi,\tau)\cos \theta_j(\phi,\tau),
        r s_j(\phi,\tau)\sin \theta_j(\phi,\tau),
        r\tau
        \bigr),
        \qquad -\varepsilon\leq \tau\leq \varepsilon .
\]
Thus, by \eqref{eq:sheet-equation},
\[
        P(x_j^\phi(\tau))=r^m e^{i\phi}
\]
for every \(j=0,\ldots,m-1\) and every
\(\tau\in[-\varepsilon,\varepsilon]\).  Define
\[
        Z_j^\phi(\tau)=Z(x_j^\phi(\tau)),
        \qquad -\varepsilon\leq \tau\leq \varepsilon .
\]

If \(F\) is one-to-one, then each \(Z_j^\phi\) is one-to-one.  Indeed, if
\(\tau_1,\tau_2\in[-\varepsilon,\varepsilon]\) and
\[
        Z_j^\phi(\tau_1)=Z_j^\phi(\tau_2),
\]
then
\[
        P(x_j^\phi(\tau_1))
        =
        r^m e^{i\phi}
        =
        P(x_j^\phi(\tau_2)).
\]
Since \(F=(P,Z)\), we obtain
\[
        F(x_j^\phi(\tau_1))=F(x_j^\phi(\tau_2)).
\]
The injectivity of \(F\) gives
\[
        x_j^\phi(\tau_1)=x_j^\phi(\tau_2).
\]
Comparing the third coordinates, and using \(r>0\), gives
\[
        \tau_1=\tau_2.
\]
Hence \(Z_j^\phi\) is one-to-one.

Set
\[
        I_j(\phi)=Z_j^\phi([-\varepsilon,\varepsilon]).
\]
Since \(Z_j^\phi\) is continuous and \([-\varepsilon,\varepsilon]\) is compact
and connected, \(I_j(\phi)\) is a compact interval in \(\mathbb R\).

We next show that these intervals are pairwise disjoint.  Suppose, to the
contrary, that for some \(j\neq k\),
\[
        I_j(\phi)\cap I_k(\phi)\neq\varnothing .
\]
Then there exist \(\tau,\sigma\in[-\varepsilon,\varepsilon]\) such that
\[
        Z_j^\phi(\tau)=Z_k^\phi(\sigma).
\]
On the other hand, by (2.2),
\[
        P(x_j^\phi(\tau))
        =
        r^m e^{i\phi}
        =
        P(x_k^\phi(\sigma)).
\]
Therefore
\[
        F(x_j^\phi(\tau))=F(x_k^\phi(\sigma)).
\]
Since \(F\) is one-to-one, we have
\[
        x_j^\phi(\tau)=x_k^\phi(\sigma).
\]
Comparing the third coordinates gives
\[
        \tau=\sigma.
\]
Comparing the first two coordinates then gives
\[
        s_j(\phi,\tau)e^{i\theta_j(\phi,\tau)}
        =
        s_k(\phi,\tau)e^{i\theta_k(\phi,\tau)}.
\]
Since \(s_j\) and \(s_k\) are positive, this implies
\[
        \theta_j(\phi,\tau)\equiv \theta_k(\phi,\tau)
        \pmod {2\pi}.
\]
But
\[
        \theta_j(\phi,\tau)-\theta_k(\phi,\tau)
        =
        \frac{2\pi(j-k)}{m}
        +
        \alpha_j(\phi,\tau)-\alpha_k(\phi,\tau).
\]
Since
\[
        |\alpha_j(\phi,\tau)|<\varepsilon,
        \qquad
        |\alpha_k(\phi,\tau)|<\varepsilon,
\]
we have
\[
        |\alpha_j(\phi,\tau)-\alpha_k(\phi,\tau)|<2\varepsilon.
\]
By the choice \(0<\varepsilon<\pi/(4m)\),
\[
        2\varepsilon<\frac{\pi}{2m}.
\]
On the other hand, for \(j\neq k\) with \(0\leq j,k\leq m-1\),
\[
        \operatorname{dist}_{\mathbb R/2\pi\mathbb Z}
        \left(
        \frac{2\pi(j-k)}{m},0
        \right)
        \geq \frac{2\pi}{m}.
\]
Therefore
\[
        \theta_j(\phi,\tau)\not\equiv \theta_k(\phi,\tau)
        \pmod {2\pi},
\]
which is a contradiction.  Hence
\[
        I_j(\phi)\cap I_k(\phi)=\varnothing,
        \qquad j\neq k .
\]

Finally, we record the order consequence.  Write
\[
        a_j(\phi)=\min I_j(\phi),
        \qquad
        b_j(\phi)=\max I_j(\phi).
\]
Since \(Z_j^\phi\) is continuous and one-to-one on
\([-\varepsilon,\varepsilon]\), we have
\[
        I_j(\phi)=[a_j(\phi),b_j(\phi)].
\]
Moreover,
\[
        a_j(\phi)
        =
        \min\{Z_j^\phi(-\varepsilon),Z_j^\phi(\varepsilon)\},
        \qquad
        b_j(\phi)
        =
        \max\{Z_j^\phi(-\varepsilon),Z_j^\phi(\varepsilon)\}.
\]
Thus \(a_j\) and \(b_j\) depend continuously on \(\phi\).  For \(j\neq k\),
the disjointness of \(I_j(\phi)\) and \(I_k(\phi)\) means that exactly one of
the two strict inequalities
\[
        b_j(\phi)<a_k(\phi),
        \qquad
        b_k(\phi)<a_j(\phi)
\]
holds.  Since \(a_j,a_k,b_j,b_k\) are continuous functions of \(\phi\), such a
strict inequality persists for \(\phi\) in a small neighbourhood of the given
parameter value.  Hence the left-to-right order of the labelled intervals
\[
        I_0(\phi),\ldots,I_{m-1}(\phi)
\]
is locally constant in \(\phi\).  Since \([0,2\pi]\) is connected, this order
is the same at \(\phi=0\) and at \(\phi=2\pi\).

On the other hand, the cyclic monodromy of the sheets gives
\[
        I_j(2\pi)=I_{j+1}(0),
        \qquad j=0,\ldots,m-1,
\]
again modulo $m$. A non-trivial cycle of $m\geq2$ objects cannot preserve a
linear order. This contradiction proves the lemma.
\end{proof}

\section{The rank one case}

Throughout the proof we work in a connected coordinate ball. Under
\eqref{eq:noncritical}, equation \eqref{eq:plaplace} is equivalent to
\eqref{eq:nondiv}. The coefficient matrix
\[
        A(q)=I+(p-2)\frac{q\otimes q}{|q|^2}
\]
has eigenvalues
\[
        1,\qquad 1,\qquad p-1.
\]
Thus it is positive definite for every $1<p<\infty$.

\begin{proposition}\label{prop:rankone}
Let $x_0\in\Omega$. If $u$ satisfies \eqref{eq:nondiv}, then
\[
        \rank D^2u(x_0)\neq 1.
\]
\end{proposition}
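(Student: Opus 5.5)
If $\rank D^2u(x_0)=1$, we can write
\[
        D^2u(x_0)=\lambda\, v\otimes v,\qquad \lambda\neq0,\quad |v|=1 .
\]
Evaluating \eqref{eq:nondiv} at $x_0$ with $q=Du(x_0)\neq0$ gives
\[
        0=a_{ij}(q)u_{ij}(x_0)=\lambda\, A(q)v\cdot v .
\]
The preceding discussion shows that $A(q)$ has eigenvalues $1,1,p-1$, all positive for $1<p<\infty$. Hence $A(q)v\cdot v\geq\min\{1,p-1\}|v|^2>0$, and therefore $\lambda=0$. This contradicts $\rank D^2u(x_0)=1$.

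The plan is therefore a purely pointwise algebraic argument with three steps. First, diagonalize the symmetric matrix $D^2u(x_0)$; rank one means exactly one non-zero eigenvalue $\lambda$, with unit eigenvector $v$. Second, observe that the trace pairing $\tr\bigl(A(q)D^2u(x_0)\bigr)$ reduces to $\lambda\,A(q)v\cdot v$. Third, invoke the positive definiteness of $A(q)$, which holds because $|Du(x_0)|>0$ by \eqref{eq:noncritical}, so that $A(q)$ is well defined at $x_0$.

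There is no real obstacle here. The only point to check is that \eqref{eq:nondiv} holds classically at $x_0$. This follows from the analyticity of $u$ in the non-critical region, noted in the introduction.
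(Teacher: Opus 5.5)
Your proof is correct and is essentially the same as the paper's: write $D^2u(x_0)=\lambda\,v\otimes v$, evaluate \eqref{eq:nondiv} at $x_0$ to get $\lambda\,A(Du(x_0))v\cdot v=0$, and use the positive definiteness of $A$ to force $\lambda=0$. Your remark that the equation holds classically at $x_0$, by analyticity in the non-critical region, is the same justification the paper relies on.
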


\begin{proof}
Assume that $D^2u(x_0)$ has rank one. Then
\[
        D^2u(x_0)=\lambda \xi\otimes\xi
\]
with $\lambda\neq 0$ and $\xi\neq0$. Evaluating \eqref{eq:nondiv} at $x_0$
gives
\[
        0
        =
        \lambda\, \xi^TA(Du(x_0))\xi.
\]
Since $A(Du(x_0))$ is positive definite, the last factor is positive. Hence
$\lambda=0$, a contradiction.
\end{proof}

\section{The rank two case}

In this section we exclude points at which $D^2u$ has rank two. The argument
is local, so assume that
\[
        x_0=0,
        \qquad
        \rank D^2u(0)=2.
\]
Choose coordinates so that the $x_3$-axis is the kernel of $D^2u(0)$, and
write
\[
        x'=(x_1,x_2).
\]
Then the $2\times2$ block $D^2_{x'x'}u(0)$ is invertible. By the implicit
function theorem the map
\begin{align}\label{diff2}
            (x',x_3)\longmapsto (D_{x'}u(x),x_3)
\end{align}
is a local diffeomorphism. We denote that
\[
        y=D_{x'}u(x)\in \mathbb{R}^2,\qquad s=x_3,
\]
and in the following, we will calculate under the coordinate $(y,s)$.

As in Rios-Sawyer-Wheeden\cite{AdvMath05}, we define the partial Legendre transform
\[
        \phi(y,s)=x'\cdot y-u(x',s).
\]
Here, and throughout this section, $x'=x'(y,s)$ denotes the inverse branch
given by \eqref{diff2}. The basic identities are
\[
D_y \phi = D_y x' \, y + x' - D_{x'} u \, D_y x'=x',
\]
\[
\phi_s = x_s' \cdot y - D_{x'} u  \cdot x_s' - u_s=-u_s.
\]
then 
\[
        Du=(y,-\phi_s).
\]
Set
\[
B=D^2_{x'x'}u, \qquad P=D_y^2\phi
\]
then 
\[
B=D_{x'}y=(D_yx')^{-1}=(D_y^2\phi)^{-1}=P^{-1}
\]
Since $y=D_{x'}u(x',s)$, differentiation for $s$ gives
\[
        0=D^2_{x'x'}u\,x'_s+D_{x'}u_s.
\]
Thus
\[
        D_{x'}u_s=-Bx'_s.
\]
and
\[
        D_y\phi_s=x'_s=-D_y^2\phi D_{x'}u_s.
\]

Finally,
\[
       \phi_{ss}=-u_{ss}-D_{x'}u_s\cdot x'_s
        =
        -u_{ss}+(x'_s)^TBx'_s,
\]

These identities give the Hessian block factorization
below:
\begin{equation}\label{eq:hessianblocks}
        D^2u=
        \begin{pmatrix}
        B&-Bx'_s\\
        -(x'_s)^TB&(x'_s)^TBx'_s- \phi_{ss}
        \end{pmatrix}.
\end{equation}
\eqref{eq:nondiv} is
\[
        \Delta u+(p-2)\frac{D^2u(Du,Du)}{|Du|^2}=0,
\]

Let
\[
        R=|Du|^2=|y|^2+\phi_s^2,
\]
\[
        A=1+(p-2)\frac{\phi_s^2}{R},
\]
and
\[
        M=I+x'_s\otimes x'_s
        +(p-2)\frac{(y+\phi_sx'_s)\otimes(y+\phi_sx'_s)}{R}.
\]
From \eqref{eq:hessianblocks} one has
\[
        \Delta u=\tr B+(x'_s)^TBx'_s- \phi_{ss},
\]
and, since $Du=(y,-\phi_s)$,
\[
        D^2u(Du,Du)
        =
        (y+\phi_sx'_s)^TB(y+\phi_sx'_s)-\phi_s^2\phi_{ss}.
\]
and $|Du|^2=R$. Therefore
\[
        \tr B+(x'_s)^TBx'_s-\phi_{ss}
        +(p-2)\frac{(y+\phi_sx'_s)^TB(y+\phi_sx'_s)-\phi_s^2\phi_{ss}}{R}=0.
\]
Now let 
\[
w=\phi_{ss}
\]
Substituting these two identities into \eqref{eq:nondiv} gives the
transformed equation
\begin{equation}\label{eq:transformed}
        \tr(BM)=Aw.
\end{equation}

\begin{lemma}\label{lem:AMpositive}
For every $1<p<\infty$,
\[
        A>0,
        \qquad
        M>0.
\]
The positivity is locally uniform as long as $|Du|>0$ and the partial
Legendre transform is defined.
\end{lemma}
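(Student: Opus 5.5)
The plan is a direct algebraic check of the two quantities, split by the sign of $p-2$. Throughout we use $R=|y|^2+\phi_s^2=|Du|^2>0$, which holds by \eqref{eq:noncritical}.

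\textbf{The scalar $A$.} Since $0\le \phi_s^2/R\le 1$, $A$ is a convex combination of $1$ and $p-1$:
\[
A=\Bigl(1-\frac{\phi_s^2}{R}\Bigr)\cdot 1+\frac{\phi_s^2}{R}\,(p-1).
\]
Hence $A\ge \min\{1,p-1\}>0$. This is exactly the statement that the quadratic form of the coefficient matrix $A(q)$ on $e_3$ is bounded below by its smallest eigenvalue.

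\textbf{The matrix $M$.} Put $v=y+\phi_s x'_s$ and $a=x'_s$, so that
\[
M=I+a\otimes a+(p-2)\frac{v\otimes v}{R}.
\]
If $p\ge2$, then $M\ge I$ and we are done.

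If $1<p<2$, we argue structurally rather than by brute force. Let $T$ be the $3\times2$ matrix with rows $I_2$ and $a^T$. Then
\[
T^TT=I+a\otimes a,\qquad T^TDu=y+\phi_s a=v,
\]
using $Du=(y,-\phi_s)$ and being careful with the sign convention. Therefore
\[
M=T^T\Bigl(I_3+(p-2)\frac{Du\otimes Du}{|Du|^2}\Bigr)T=T^TA(Du)\,T.
\]
This identity is really what \eqref{eq:transformed} encodes: the block form \eqref{eq:hessianblocks} gives $D^2u=T B T^T$ plus a $\phi_{ss}$ term.

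With the identity in hand, note that $T$ has rank $2$ and $A(Du)\ge\min\{1,p-1\}I$. Hence
\[
M\ge \min\{1,p-1\}\,T^TT\ge \min\{1,p-1\}\,I .
\]
This gives a bound uniform in $p$ on compacts, independent of $a$.

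\textbf{Where care is needed.} The only delicate point is getting the signs in $T$ consistent with $Du=(y,-\phi_s)$ and the off-diagonal block $-Bx'_s$. The relevant quantity is $D^2u(Du,Du)=(T^TDu)^TB(T^TDu)+\dots$. By \eqref{eq:hessianblocks}, $T$ should be the matrix whose rows are $I_2$ and $-(x'_s)^T$, so that $T^TDu=y+\phi_s x'_s$. I would verify this directly; the sign of the row does not affect $T^TT=I+a\otimes a$.

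\textbf{Local uniformity.} On a compact subset where the transform is defined and $|Du|\ge c>0$, the bounds above are explicit constants. So the positivity is locally uniform, as stated in Lemma~\ref{lem:AMpositive}.
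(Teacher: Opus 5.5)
Your argument is correct and is essentially the paper's. The paper's Cauchy--Schwarz step $(\xi\cdot v)^2\le R\,(|\xi|^2+(\xi\cdot x'_s)^2)$ is exactly the bound $(T\xi)^TA(Du)(T\xi)\ge\min\{1,p-1\}|T\xi|^2$ that you obtain from the congruence $M=T^TA(Du)T$. That congruence is a nice structural repackaging, and it also explains \eqref{eq:transformed} through $D^2u=TBT^T-\phi_{ss}\,e_3\otimes e_3$. However, define $T$ with third row $-(x'_s)^T$ from the outset: with $+(x'_s)^T$ your first claimed identity fails, since then $T^TDu=y-\phi_s x'_s$ rather than $v$.
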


\begin{proof}
The assertion $A>0$ follows from
\[
        A=\frac{|y|^2+(p-1)\phi_s^2}{|y|^2+\phi_s^2}.
\]
For $M$, fix $\xi\in\mathbb R^2$ and write
\[
        v=y+\phi_sx'_s.
\]
Then
\[
        \xi\cdot v
        =
        (y,\phi_s)\cdot(\xi,\xi\cdot x'_s),
\]
and Cauchy's inequality in $\mathbb R^3$ gives
\begin{align}\label{cauchy1}
       (\xi\cdot v)^2
        \leq
        (|y|^2+\phi_s^2)\left(|\xi|^2+(\xi\cdot x'_s)^2\right).
\end{align}

If $p\geq2$, positivity is immediate. If $1<p<2$, \eqref{cauchy1}
gives
\[
        \xi^TM\xi
        \geq
        (p-1)\left(|\xi|^2+(\xi\cdot x'_s)^2\right).
\]
This proves the claim.
The local uniformity follows from $R=|Du|^2$ being bounded away from zero
and from the boundedness of $h$ in a sufficiently small coordinate
neighbourhood.
\end{proof}

The local homeomorphism assumption is now applied to the simple map
\[
        G(y,s)=(y,-\phi_s(y,s)).
\]
Let
\[
        H(x)=(D_{x'}u(x),x_3).
\]
After shrinking the neighbourhood, $H$ is a diffeomorphism onto its image and
\[
        Du=G\circ H.
\]
because
\[
\phi_s =-u_s.
\]
Hence
\[
        G=Du\circ H^{-1}
\]
is a local homeomorphism. Furthermore
\[
        DG(y,s)=
        \begin{pmatrix}
        1&0&0\\
        0&1&0\\
        -\phi_{sy_1}& -\phi_{sy_2}& -\phi_{ss}
        \end{pmatrix},
        \qquad
        \det DG=-w.
\]
By Lemma \ref{lem:sign}, there is a number $\sigma\in\{-1,1\}$ such that
\begin{equation}\label{eq:wpositive}
        \sigma w\geq0,
        \qquad
        w(0)=0.
\end{equation}
Here $w(0)=0$ follows from the choice of the $x_3$-axis as the kernel of
$D^2u(0)$. Indeed, \eqref{eq:hessianblocks} gives
\[
        0=D^2u(0)e_3=
        \binom{-B(0)x'_s(0)}{x'_s(0)^TB(0)x'_s(0)-\phi_{ss}(0)}.
\]
Since $B(0)$ is invertible, $D^2u(0)e_3=0$ implies
\[
       x'_s(0)=0,\qquad \phi_{ss}(0)=0.
\]
In particular, \eqref{eq:wpositive} implies
\[
        Dw(0)=0.
\]

\begin{lemma}\label{lem:ranktwoineq}
Under the rank two assumptions above, after \eqref{eq:wpositive} has been
established and after shrinking the coordinate neighbourhood if necessary,
$w$ satisfies a uniformly elliptic differential inequality
\begin{equation}\label{eq:Lineq}
        |\mathcal Lw|\leq C\left(|w|+|Dw|\right)
\end{equation}
in a neighbourhood of $0$, where
\[
        \mathcal Lw
        =
        \tr(BD_y^2w\,BM)
        -2\Gamma\cdot D_yw_s
        +Aw_{ss}
\]
and
\[
        \Gamma
        =
        Bx'_s+(p-2)\frac{\phi_s}{R}B(y+\phi_sx'_s).
\]
\end{lemma}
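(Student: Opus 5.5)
The plan is to differentiate the transformed equation \eqref{eq:transformed}, $\tr(BM)=Aw$, twice in $s$. Before that, we rewrite $B$ and $M$ in terms of derivatives of $\phi$.

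\emph{Step 1 (dependence on $w$).} We have $B=P^{-1}$ with $P=D_y^2\phi$, and $x'_s=D_y\phi_s$. Hence $B$, $M$ and $A$ are smooth functions of
\[
Y=(y,\ \phi_s,\ D_y\phi_s,\ D_y^2\phi).
\]
Derivatives in $s$ hit $\phi_s$, $D_y\phi_s$ and $D_y^2\phi$, and produce
\[
\partial_s\phi_s=w,\qquad \partial_s D_y\phi_s=D_yw,\qquad \partial_s D^2_y\phi=D^2_y\phi_s .
\]
So define $F(Y)=\tr(BM)-A\phi_{ss}$; the equation reads $F=0$. Differentiating once in $s$ gives a linear relation for $v=\phi_s$:
\[
-\tr\bigl(B\,D_y^2v\,BM\bigr)+\tr\bigl(B\,\partial_sM\bigr)-A_s w-Aw_s=0 .
\]
Here $\partial_s M$ involves $D_yw$ (through $x'_s$) and $w$ (through $\phi_s$).

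\emph{Step 2 (second derivative).} Differentiating in $s$ once more, $w=v_s$ appears through three groups of terms:
\begin{itemize}
\item the principal terms $-\tr(BD_y^2w\,BM)$ and $-Aw_{ss}$;
\item the mixed term $D_yw_s$, which enters via $\partial_s^2 M$: the factor $x'_s\otimes x'_s$ gives $2\,\mathrm{sym}(D_yw_s\otimes x'_s)$, and the $(p-2)$ factor gives $2(p-2)\phi_s R^{-1}\,\mathrm{sym}(D_yw_s\otimes v)$ with $v=y+\phi_sx'_s$ (renaming);
\item lower-order terms.
\end{itemize}
Pairing the mixed terms with $B$ gives exactly $2\Gamma\cdot D_yw_s$ with $\Gamma$ as stated, up to the overall sign. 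This is the bookkeeping check against the definition of $\mathcal L$.

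The lower-order terms are products of bounded coefficients (smooth functions of $Y$ and its first $s$-derivatives) with $w$, $D_yw$, $w_s$, or with quadratic expressions. Examples are $\tr(B\,D_y^2\phi_s\,B\,D_y^2\phi_s\,BM)$ coming from $\partial_s^2B$, and $\tr(BD_y^2\phi_s B\,\partial_sM)$.

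\emph{Main obstacle.} These quadratic terms in $D_y^2\phi_s$ do not contain $w$ and are not a priori $O(|w|+|Dw|)$. I would use the one-sign condition \eqref{eq:wpositive} together with the first-derivative identity of Step 1 to control them. Step 1 expresses $\tr(BD_y^2\phi_sBM)$ linearly through $w$, $w_s$, $D_yw$. If in addition $\sigma w\ge0$, then at points of the zero set $D w=0$.

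The trick I would try first is to complete the square. Group all terms quadratic in $N:=D_y^2\phi_s$ into $\tr(BNBNBM)$-type expressions. Then show that their combination with the remaining terms is a multiple of the Step 1 expression (which is $O(|w|+|Dw|)$) plus a remainder linear in $w$ and $Dw$. This is analogous to the way the $p$-harmonic structure makes $|Du|^{p-2}$-type quantities satisfy linearized equations.

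If that fails, the fallback is a local consequence of $w\ge0$ and analyticity. Use the inequality only where it is needed for the strong maximum principle, namely as a one-sided bound $\sigma\mathcal Lw\le C(|w|+|Dw|)$. The quadratic terms $\tr(BNBNBM)$ have a definite sign since $B$ and $M$ are congruent to positive forms on the relevant range, and $\sigma$ may be chosen to match that sign.

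\emph{Step 3 (ellipticity).} Uniform ellipticity of $\mathcal L$ follows from Lemma \ref{lem:AMpositive}. The second-order symbol is
\[
\xi'^TBMB\xi'-2\xi_3\,\Gamma\cdot\xi'+A\xi_3^2 ,
\]
and it equals the pullback of the original symbol $a_{ij}(Du)$ under the diffeomorphism \eqref{diff2}. I would verify this either directly or by noting that the principal part is the linearization of the elliptic equation \eqref{eq:nondiv} in the new coordinates. Positivity then holds after shrinking the neighbourhood so that all coefficients are bounded.
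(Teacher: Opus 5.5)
Your Steps 1 and 2 match the paper's computation, including the pairing of the mixed terms with $B$ that produces $2\Gamma\cdot D_yw_s$. Your Step 3 is correct and a bit slicker than the paper's direct check: the symbol $\xi^TBMB\xi-2\tau\,\Gamma\cdot\xi+A\tau^2$ is exactly $a_{ij}(Du)$ evaluated on the pulled-back covector $(B\xi,\ \tau-B\xi\cdot x'_s)$. That is the paper's expression $|\eta|^2+(\eta\cdot x'_s-\tau)^2+(p-2)(\eta\cdot v-\tau\phi_s)^2/R$.

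\textbf{The gap.} You never show that the quadratic term $\tr(BP_sBP_sBM)$, with $P_s=D_y^2\phi_s$, is $O(|w|+|Dw|)$, and this is the step that carries the lemma. The other term you flag, $\tr(BP_sBM_s)$, is harmless, since $M_s=O(|w|+|Dw|)$. Your ``complete the square'' idea is only stated as a hope. Your fallback is false, for two reasons:
\begin{itemize}
\item Near $0$ the matrix $B$ is \emph{not} congruent to a positive form. Since $D^2u(0)=\diag(B(0),0)$ and $\tr\bigl(a(Du(0))D^2u(0)\bigr)=0$ with $a>0$, $B(0)$ is indefinite. With $S=M^{1/2}BM^{1/2}$ and $N=M^{-1/2}P_sM^{-1/2}$, the quantity $\tr(BP_sBP_sBM)=\tr(SNSNS)$ has no a priori sign: in an eigenbasis of $S$, $N=\diag(1,0)$ and $N=\diag(0,1)$ give values of opposite sign.
\item $\sigma$ is fixed by the orientation of $G$ through Lemma~\ref{lem:sign}, so it cannot be chosen to match any sign.
\end{itemize}

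\textbf{The missing idea.} Use the \emph{undifferentiated} equation together with the $2\times2$ structure. One has $\tr S=\tr(BM)=Aw=O(|w|)$. By your Step 1, $\tr(SNS)=\tr(BP_sBM)=O(|w|+|Dw|)$.

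The paper writes $S=\diag(\lambda,\mu)$ and $N=\begin{pmatrix}\alpha&\beta\\ \beta&\gamma\end{pmatrix}$ in an eigenbasis of $S$, with $\lambda,\mu$ of opposite signs and bounded away from zero. It deduces $\mu=-\lambda+O(|w|)$, hence $\alpha+\gamma=O(|w|+|Dw|)$. It then concludes from
\[
\tr(SNSNS)=\lambda^3\alpha^2+\mu^3\gamma^2+\lambda\mu(\lambda+\mu)\beta^2=\lambda^3(\alpha-\gamma)(\alpha+\gamma)+O(|w|).
\]
Equivalently, for symmetric $2\times2$ matrices there is the exact identity
\[
\tr(SNSNS)=\tr(SN)\,\tr(SNS)-\det S\,\tr S\,\det N ,
\]
with $\tr(SN)=\tr(BP_s)$ bounded. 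This is precisely the ``multiple of the Step 1 expression plus an $O(|w|)$ remainder'' you were hoping for. It rests on the near-vanishing of $\tr(BM)$ and on the dimension being two, and your proposal invokes neither. Without it the quadratic term is uncontrolled and \eqref{eq:Lineq} does not follow.
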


\begin{proof}
The components of $A$ and $M$ are all smooth in the neighborhood of $x_0$, so they and their derivatives of any order are all bounded.
The variable $y$ is fixed in this differentiation, and $A$ depends only on $y$ and $\phi_s$.,then
\begin{align}\label{As}
      A_s=A_{\phi_s} w,
        \qquad
        A_{ss}=A_{\phi_s} w_s+A_{\phi_s\phi_s}w^2.
\end{align}
Since $M$ is a smooth function of
\[
        y,\qquad \phi_s,\qquad  x'_s,
\]
and the variable $y$ is fixed when differentiating in $s$, then
\begin{align}\label{Ms}
     M_s=M_{\phi_s} w+M_{x^{\alpha}_s}w_\alpha,
\end{align}
Then
\[
        M_s=O(|w|+|Dw|)
\]
Since
\[
        (\phi_s)_s=w,
        \qquad
        (x'_s)_s=D_yw,
\]
Differentiating \eqref{Ms} once more gives
\begin{align}\label{Mss}
        M_{ss}
        ={}&
        M_{\phi_s}w_s+M_{x^{\alpha}_s}w_{\alpha s}
        +M_{\phi_s\phi_s}w^2
        +2M_{\phi_sx^{\alpha}_s}ww_\alpha  \\
        &\quad
        +M_{x^{\alpha}_s x^{\beta}_s}w_\alpha w_\beta .
\end{align}
here the index $\alpha$ and $\beta$ sum from 1 to 2.
By \eqref{eq:wpositive}, the function $w$ has a local extremum at $0$, and hence
$Dw(0)=0$. After shrinking the neighbourhood we may assume
\[
        |w|+|Dw|\leq1.
\]
Since $B=P^{-1}$ and $P_{ss}=(D^2_y\phi)_{ss}=D^2_yw$,
\[
        B_s=-BP_sB,
\]
and
\[
        B_{ss}=2BP_sBP_sB-BD_y^2wB.
\]
Differentiate \eqref{eq:transformed} once in $s$. 
\begin{align}\label{diff1}
     -\tr(BP_sBM)+\tr(BM_s)-A_sw-Aw_s=0.
\end{align}

All coefficients in \eqref{diff1} are bounded in the neighbourhood of $0$,
because $R=|Du|^2$ is bounded away from zero. Hence
\begin{equation}\label{eq:firstder}
        \tr(BP_sBM)=O(|w|+|Dw|).
\end{equation}
The original equation \eqref{eq:transformed} also gives
\begin{equation}\label{eq:traceBM}
        \tr(BM)=O(|w|).
\end{equation}

Differentiating \eqref{eq:transformed} twice in $s$ gives
\[
        0=
        \frac{d^2}{ds^2}\{\tr(BM)-Aw\}
        =
        \tr(B_{ss}M)+2\tr(B_sM_s)+\tr(BM_{ss})
        -A_{ss}w-2A_sw_s-Aw_{ss}.
\]
Using the identities for $B_s$ and $B_{ss}$ above, this becomes
\begin{align}
0={}&
        2\tr(BP_sBP_sBM)
        -\tr(BD_y^2wBM)
        -2\tr(BP_sBM_s)
        +\tr(BM_{ss})
        \notag\\
&       -A_{ss}w-2A_sw_s-Aw_{ss}.
\label{eq:secondder}
\end{align}
Recall
\[
        A=1+(p-2)\frac{\phi_s^2}{|y|^2+\phi_s^2},
\]
since $R=|y|^2+\phi_s^2=|Du|^2$ is bounded away from zero, the derivatives
\[
        A_{\phi_s},\qquad A_{\phi_s\phi_s}
\]
are bounded in the chosen neighbourhood.
By \eqref{As}, all first and second derivatives of $A$ involve no second derivatives of $w$, that is
\[
        A_{ss}w+2A_sw_s=O(|w|+|Dw|).
\]
And $B,P_s$ are bounded because $u$ and $\phi$ are all smooth, so
\[
        \tr(BP_sBM_s)=O(|w|+|Dw|).
\]
It remains to control $2\tr(BP_sBP_sBM)$. For $M$ is positive definite, set
\[
        S=M^{1/2}BM^{1/2},
        \qquad
        N=M^{-1/2}P_sM^{-1/2},
\]
then we have
\[
        \tr(BM)=\tr S,
        \qquad
        \tr(BP_sBM)=\tr(SNS),
\]
and
\[
        \tr(BP_sBP_sBM)=\tr(SNSNS).
\]
The matrix $S$ is symmetric. At the base point $w(0)=0$, and the block
factorization \eqref{eq:hessianblocks} gives
\[
        D^2u(0)=
        \begin{pmatrix}
        B(0)&0\\
        0&0
        \end{pmatrix}.
\]
The two non-zero eigenvalues of $D^2u(0)$ have opposite signs. Indeed, if
they had the same sign, then $D^2u(0)$ would be non-zero semidefinite, and the
elliptic equation
\[
        \tr(a(Du(0))D^2u(0))=0
\]
with $a(Du(0))>0$ would be impossible. Thus $B(0)$ has one positive and one
negative eigenvalue. Because $M(0)>0$, $S(0)=M(0)^{1/2}B(0)M(0)^{1/2}$
has the same inertia as $B(0)$. Hence, after shrinking the
neighbourhood, the two eigenvalues $\lambda,\mu$ of $S$ are opposite in sign
and bounded away from zero.
Choose an orthonormal basis diagonalizing $S$ and write
\[
        S=
        \begin{pmatrix}
        \lambda&0\\
        0&\mu
        \end{pmatrix},
        \qquad
        N=
        \begin{pmatrix}
        \alpha&\beta\\
        \beta&\gamma
        \end{pmatrix}.
\]
Equations \eqref{eq:traceBM} and \eqref{eq:firstder} become
\[
        \lambda+\mu=O(|w|),
\]
and
\[
        \lambda^2\alpha+\mu^2\gamma=O(|w|+|Dw|).
\]
Since $\mu=-\lambda+O(|w|)$ and $|\lambda|$ is bounded away from zero, it
follows that
\[
        \alpha+\gamma=O(|w|+|Dw|).
\]
A direct computation gives
\[
        \tr(SNSNS)
        =
        \lambda^3\alpha^2+\mu^3\gamma^2
        +\lambda\mu(\lambda+\mu)\beta^2.
\]
Using $\mu=-\lambda+O(|w|)$, the preceding estimate
$\alpha+\gamma=O(|w|+|Dw|)$, and the boundedness of $N$, we get
\[
        \tr(BP_sBP_sBM)=\tr(SNSNS)=O(|w|+|Dw|).
\]
The only second derivatives of $w$ contained in $M_{ss}$ are the mixed
derivatives $w_{\alpha s}$. Indeed, $M$ is a smooth function of
\[
        y,\qquad \phi_s,\qquad  x'_s,
\]
and the variable $y$ is fixed when differentiating in $s$. Since
\[
        (\phi_s)_s=w,
        \qquad
        (x'_s)_s=D_yw,
\]
Differentiating \eqref{Ms} once more gives
\begin{align*}
        M_{ss}
        ={}&
        M_{\phi_s}w_s+M_{x^{\alpha}_s}w_{\alpha s}
        +M_{\phi_s\phi_s}w^2
        +2M_{\phi_sx^{\alpha}_s}ww_\alpha  \\
        &\quad
        +M_{x^{\alpha}_s x^{\beta}_s}w_\alpha w_\beta .
\end{align*}
All derivatives of $M$ appearing here are bounded in the chosen
neighbourhood, because $R=|Du|^2$ is bounded away from zero. Hence
\[
        M_{ss}
        =
  M_{x^{\alpha}_s}w_{\alpha s}
        +O(|w|+|Dw|).
\]
Moreover,
\[
        M_{x^{\alpha}_s}
        =
        e_\alpha\otimes x'_s+x'_s\otimes e_\alpha
        +(p-2)\frac{\phi_s}{R}\left(e_\alpha\otimes(y+\phi_sx'_s)
        +(y+\phi_sx'_s)\otimes e_\alpha\right).
\]
Using the symmetry of $B$,
\[
        \tr\bigl(B(e_\alpha\otimes x'_s)\bigr)
        =
        x'_s\cdot Be_\alpha
        =
        (Bx'_s)_\alpha,
\]
and
\[
        \tr\bigl(B(x'_s\otimes e_\alpha)\bigr)
        =
        e_\alpha\cdot Bx'_s
        =
        (Bx'_s)_\alpha.
\]
The two terms containing $y+\phi_sx'_s$ similarly contribute
\[
        2(p-2)\frac{\phi_s}{R}\bigl(B(y+\phi_sx'_s)\bigr)_\alpha.
\]
Therefore
\[
        \tr(BM_{x^{\alpha}_s})=2\Gamma_\alpha.
\]
Consequently
\[
        \tr(BM_{ss})
        =
        2\Gamma\cdot D_yw_s
        +O(|w|+|Dw|).
\]
The first term on the right is kept as part of the principal operator.

Thus \eqref{eq:secondder} implies that 
\begin{equation}\label{eq:Lidentity}
        \mathcal Lw\leq C(|w|+|Dw|).
\end{equation}
Finally we check ellipticity. The principal symbol of $\mathcal L$ is
\[
        \xi^TBMB\xi-2\tau\,\Gamma\cdot\xi+A\tau^2.
\]
Putting
\[
        \eta=B\xi,
        \qquad
        v=y+\phi_sx'_s,
\]
we have
\[
        \xi^TBMB\xi
        =
        \eta^TM\eta
        =
        |\eta|^2+(\eta\cdot x'_s)^2
        +(p-2)\frac{(\eta\cdot v)^2}{R},
\]
\[
        -2\tau\,\Gamma\cdot\xi
        =
        -2\tau\,\eta\cdot x'_s
        -2(p-2)\tau\frac{\phi_s\,\eta\cdot v}{R},
\]
and
\[
        A\tau^2
        =
        \tau^2
        +(p-2)\frac{\phi_s^2\tau^2}{R}.
\]
Therefore the principal symbol equals
\[
        |\eta|^2+(\eta\cdot x'_s-\tau)^2
        +(p-2)\frac{(\eta\cdot v-\tau \phi_s)^2}{R}.
\]
The upper bound
\[
        |\eta|^2+(\eta\cdot x'_s-\tau)^2
        +(p-2)\frac{(\eta\cdot v-\tau \phi_s)^2}{R}\leq C(|\xi|^2+\tau^2).
\]
is trivial, we then prove the lower bound.

For $1<p<2$ we use
\[
        (\eta\cdot v-\tau \phi_s)^2
        \leq
        R\left(|\eta|^2+(\eta\cdot x'_s-\tau)^2\right),
\]
which gives the lower bound
\[
        (p-1)\left(|\eta|^2+(\eta\cdot x'_s-\tau)^2\right).
\]
For the case $p\geq2$, the lower bound $|\eta|^2+(\eta\cdot x'_s-\tau)^2$ is trivial . After shrinking the neighbourhood,
$B^{-1}$ and $x'_s$ are bounded, then
\[
\eta \sim \xi
\]
and \[
|\tau|
\leq
C\left(|\eta|+|\eta\cdot x'_s-\tau|\right)
\]
for
\[
|\tau|
=
|\tau-\eta\cdot x'_s+\eta\cdot x'_s|
\leq
|\eta\cdot x'_s-\tau|+|\eta\cdot x'_s|
\leq
|\eta\cdot x'_s-\tau|+|x'_s||\eta|.
\]
Thus,

\[
        |\eta|^2+(\eta\cdot x'_s-\tau)^2
        \geq c(|\eta\cdot x'_s|^2+\tau^2)\geq c(|\xi|^2+\tau^2).
\]
Thus $\mathcal L$ is uniformly elliptic.
\end{proof}

\begin{proposition}\label{prop:ranktwo}
There is no point $x_0\in\Omega$ such that
\[
        \rank D^2u(x_0)=2.
\]
\end{proposition}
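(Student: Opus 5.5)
Assume, for contradiction, that $\rank D^2u(x_0)=2$, and work in the partial Legendre coordinates $(y,s)$ built above with $x_0$ corresponding to $(y,s)=(0\text{-shifted},0)$. The plan has two stages: first show $w=\phi_{ss}$ vanishes identically near the origin, then use Lemma~\ref{lem:sheetorder} (or a direct degree argument) to contradict that $G(y,s)=(y,-\phi_s)$ is a local homeomorphism.

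\textbf{Stage 1: $w\equiv0$.} By \eqref{eq:wpositive}, replacing $w$ by $\sigma w$ we have $w\ge0$ near $0$ and $w(0)=0$. By Lemma~\ref{lem:ranktwoineq}, $w$ satisfies $|\mathcal Lw|\le C(|w|+|Dw|)$ with $\mathcal L$ uniformly elliptic with bounded (indeed analytic) coefficients. In particular $\mathcal Lw - C|Dw| - Cw\le 0$ is of the form $\widetilde{\mathcal L}w\le 0$ with $\widetilde{\mathcal L}w=\mathcal Lw+b\cdot Dw+cw$, where $b=-C\,Dw/|Dw|$ (set to $0$ where $Dw=0$) and $c=-C$ are bounded measurable. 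Since $w\ge0$ attains an interior minimum $0$ at the origin, the strong maximum principle of Hopf (which tolerates bounded measurable lower-order coefficients and, for a nonnegative supersolution, any sign of $c$) gives $w\equiv0$ in a neighbourhood of $0$. By analyticity of $u$, hence of $\phi$ and $w$, this persists on the connected domain of the Legendre chart.

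\textbf{Stage 2: contradiction.} With $w=\phi_{ss}\equiv0$, $\phi_s$ is independent of $s$, so $G(y,s)=(y,-\phi_s(y))$ depends only on $y$. Then $G(y,s_1)=G(y,s_2)$ for $s_1\ne s_2$ small, so $G$ is not injective on any neighbourhood of $0$. But $G=Du\circ H^{-1}$ with $H$ a diffeomorphism and $Du$ a local homeomorphism, so $G$ is a local homeomorphism, hence locally injective, a contradiction. (Equivalently, $\det D^2u=-w\det B\equiv0$ on an open set, contradicting the density of regular points from the proof of Lemma~\ref{lem:sign}.) Thus no rank-two point exists; Lemma~\ref{lem:sheetorder} is not even needed here, being reserved for the degenerate rank-zero analysis.

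\textbf{Expected obstacle.} The delicate step is Stage 1: one must check that the inequality of Lemma~\ref{lem:ranktwoineq} genuinely has the one-sided form required for the strong maximum principle. The lemma's bound is two-sided, so $\mathcal Lw\le C(w+|Dw|)$ with $w\ge0$ is exactly the supersolution inequality needed. I would also verify that the Hopf principle applies with merely bounded measurable first-order coefficients, which it does for $C^2$ functions. A fallback, should any regularity issue arise, is the unique continuation/Harnack inequality for nonnegative supersolutions: such a supersolution vanishing at an interior point vanishes identically.
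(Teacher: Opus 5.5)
Your proposal is correct and follows the paper's proof: normalize to $\sigma w\ge 0$ with $w(0)=0$, apply the strong maximum principle to the inequality of Lemma~\ref{lem:ranktwoineq} to get $w\equiv 0$ near the origin, and conclude that $G(y,s)=(y,-\phi_s)$ is independent of $s$, which contradicts local injectivity. Your one addition is to write $C(|w|+|Dw|)$ explicitly as bounded measurable lower-order coefficients so that Hopf's principle applies; the paper leaves this step implicit in its citation of Gilbarg--Trudinger.
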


\begin{proof}
Assume such a point exists and use the notation above. By
\eqref{eq:wpositive},
\[
        \sigma w\geq0,
        \qquad
        w(0)=0.
\]
Lemma \ref{lem:ranktwoineq} gives
\[
        |\mathcal Lw|\leq C(|w|+|Dw|).
\]
Set
\[
        v=\sigma w.
\]
Since $-v\leq0$ and $-v$ attains the interior maximum $0$ at the origin, the
strong maximum principle for uniformly elliptic operators
\cite{GilbargTrudinger1983} yields
\[
        v\equiv0
\]
in a neighbourhood of $0$. But then
\[
        w\equiv0,
\]
and therefore
\[
        G(y,s)=(y,-\phi_s(y,s))
\]
is constant in the $s$-direction. Hence, after fixing $y$ and taking two
nearby distinct values of $s$, the map $G$ takes the same value at two
different points. Thus $G$ cannot be locally one-to-one. This contradicts the
local homeomorphism property of $G$.
\end{proof}

\section{The rank zero case}
In this section, we apply the argument similar to that in \cite{GuanWangZhang2016,HanNadirashviliYuan2003} to derive a contradiction.
Let
\[
        \gamma=|Du(0)|>0.
\]
After subtracting the constant \(u(0)\) and rotating coordinates, we may
assume that
\[
        Du(0)=\gamma e_3 .
\]
Since \(D^2u(0)=0\) and \(u\) is analytic, there is a non-zero homogeneous
polynomial \(P_d\) of degree \(d\geq 3\) such that
\[
        u(x)=\gamma x_3+P_d(x)+R(x),
        \qquad R(x)=O(|x|^{d+1}).
\]
We shall use this expansion in the \(C^2\)-sense; in particular,
\[
        DR(x)=O(|x|^d),
        \qquad
        D^2R(x)=O(|x|^{d-1}).
\]

Recall that (1.3) can be written as
\[
        a^{ij}(Du)u_{ij}=0,
        \qquad
        a^{ij}(q)
        =
        \delta_{ij}
        +(p-2)\frac{q_iq_j}{|q|^2}.
\]
Since \(\gamma>0\), the coefficient matrix \(a^{ij}(q)\) is smooth in a
neighbourhood of \(q=\gamma e_3\).  Fix \(y\in S^2\) and put \(x=ry\).
By the homogeneity of \(P_d\),
\[
        Du(ry)
        =
        \gamma e_3+r^{d-1}DP_d(y)+O(r^d),
\]
and
\[
        D^2u(ry)
        =
        r^{d-2}D^2P_d(y)+O(r^{d-1}).
\]
Consequently,
\[
        a^{ij}(Du(ry))
        =
        a^{ij}(\gamma e_3)+O(r^{d-1}).
\]
But
\[
        a^{ij}(\gamma e_3)
        =
        \operatorname{diag}(1,1,p-1).
\]
Evaluating (1.3) at \(ry\), we obtain
\[
\begin{aligned}
0
&=
        a^{ij}(Du(ry))u_{ij}(ry)  \\
&=
        \Bigl(a^{ij}(\gamma e_3)+O(r^{d-1})\Bigr)
        \Bigl(r^{d-2}(P_d)_{ij}(y)+O(r^{d-1})\Bigr)  \\
&=
        r^{d-2}
        \bigl((P_d)_{11}(y)+(P_d)_{22}(y)+(p-1)(P_d)_{33}(y)\bigr)
        +O(r^{d-1}).
\end{aligned}
\]
Dividing by \(r^{d-2}\) and letting \(r\downarrow 0\), we get
\[
        (P_d)_{11}(y)+(P_d)_{22}(y)+(p-1)(P_d)_{33}(y)=0
\]
for every \(y\in S^2\).  Since the left hand side is a homogeneous
polynomial of degree \(d-2\), it vanishes identically.  Hence
\[
        P_{d,11}+P_{d,22}+(p-1)P_{d,33}=0.
\]

Define
\[
        Q(X_1,X_2,X_3)
        =
        P_d(X_1,X_2,\sqrt{p-1}\,X_3).
\]
Then
\begin{equation}\label{eq:Qharmonic}
        \Delta Q=0.
\end{equation}
Writing
\[
        L=\diag(1,1,\sqrt{p-1}),
\]
we have
\[
        D^2Q(X)=L^TD^2P_d(LX)L.
\]
Hence
\[
        \det D^2Q(X)=(p-1)\det D^2P_d(LX).
\]
In particular, the two Hessian determinants have the same sign pattern.

\begin{lemma}\label{lem:rankzeroblowup}
Under the assumptions of Theorem \ref{thm:main},
\[
        \det D^2Q\equiv0.
\]
\end{lemma}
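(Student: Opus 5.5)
The plan is to obtain a fixed weak sign for $\det D^2P_d$ from the local homeomorphism hypothesis via a blow-up, and then feed that sign into the Gleason--Wolff maximum principle (Theorem \ref{thm:LGW}) applied to the harmonic polynomial $Q$.

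\emph{Step 1 (sign of the blow-up).} By Lemma \ref{lem:sign} applied to $F=Du$, which is analytic and a local homeomorphism, there is $\sigma\in\{-1,1\}$ with $\sigma\det D^2u\geq0$ near $0$. For fixed $y\in S^2$ the expansion gives
\[
        \det D^2u(ry)=r^{3(d-2)}\det D^2P_d(y)+O(r^{3(d-2)+1}).
\]
Dividing by $r^{3(d-2)}$ and letting $r\downarrow0$ yields $\sigma\det D^2P_d(y)\geq0$ on $S^2$, and by homogeneity on all of $\mathbb R^3$. The identity $\det D^2Q(X)=(p-1)\det D^2P_d(LX)$, with $p-1>0$, then shows that $\sigma\det D^2Q\geq0$ on $\mathbb R^3$.

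\emph{Step 2 (vanishing at a point).} Since $Q$ is homogeneous of degree $d\geq3$, $D^2Q$ is homogeneous of degree $d-2\geq1$. Hence $D^2Q(0)=0$, and in particular $\det D^2Q(0)=0$.

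\emph{Step 3 (maximum principle).} $Q$ is harmonic by \eqref{eq:Qharmonic}. Set $\widetilde Q=-\sigma Q$, which is also harmonic, with $\det D^2\widetilde Q=-\sigma\det D^2Q\leq0$. By the trace-zero eigenvalue remark following Theorem \ref{thm:LGW}, $D^2\widetilde Q$ then has at most one negative eigenvalue at every point. Since $\det D^2\widetilde Q(0)=0$ and $\mathbb R^3$ is connected, Theorem \ref{thm:LGW} gives $\det D^2\widetilde Q\equiv0$, and therefore $\det D^2Q\equiv0$.

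\emph{Expected obstacle.} The delicate point is Step 1: the sign is known only for $\det D^2u$, and passing it to the leading homogeneous part requires that the error be of strictly higher order. This holds because $D^2u(ry)=r^{d-2}D^2P_d(y)+O(r^{d-1})$ and the determinant is a cubic polynomial in the entries, so every correction term carries at least one extra power of $r$. One should also check the conventions of Theorem \ref{thm:LGW}: we apply it on the whole space, with the base point $0$ placed at the degenerate point of $Q$ rather than at a point of $\Omega$.
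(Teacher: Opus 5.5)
Your proposal is correct and follows the paper's proof essentially step for step. The paper also uses Lemma \ref{lem:sign} to get a fixed weak sign, passes it to $\det D^2P_d$ (hence $\det D^2Q$) through the $r^{3(d-2)}$ blow-up with an $O(r^{3d-5})$ error, and applies Theorem \ref{thm:LGW} to $Q$ or $-Q$ using $D^2Q(0)=0$. Your $\widetilde Q=-\sigma Q$ packaging and direct limit argument, in place of the paper's contrapositive phrasing and two-case split, are only cosmetic differences.
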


\begin{proof}
By Lemma \ref{lem:sign}, $\det D^2u$ has a fixed weak sign near $0$. If
$\det D^2P_d$ took both positive and negative values on the unit sphere,
then
\[
        \det D^2u(rx)
        =
        r^{3(d-2)}\det D^2P_d(x)
        +
        O(r^{3d-5})
\]
would force $\det D^2u$ to take both signs in every neighbourhood of $0$.
Therefore $\det D^2P_d$, and hence $\det D^2Q$, has a fixed weak sign.

If $\det D^2Q\leq0$, then the trace-free matrix $D^2Q$ has at most one
negative eigenvalue at every point. Theorem \ref{thm:LGW} applies to $Q$.
Since $d\geq3$,
\[
        D^2Q(0)=0,
\]
so $\det D^2Q$ vanishes at $0$. Hence
\[
        \det D^2Q\equiv0.
\]
If $\det D^2Q\geq0$, the same argument applies to $-Q$. This proves the
claim.
\end{proof}

\begin{proposition}\label{prop:rankzero}
There is no point $x_0\in\Omega$ such that
\[
        D^2u(x_0)=0.
\]
\end{proposition}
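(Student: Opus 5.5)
We argue by contradiction and work in the normal form fixed above: $x_0=0$, $Du(0)=\gamma e_3$, $u=\gamma x_3+P_d+R$ with $d\geq3$, and $Q(X)=P_d(LX)$ harmonic. By Lemma~\ref{lem:rankzeroblowup}, $\det D^2Q\equiv0$. Since $Q$ is a nonzero homogeneous polynomial of degree $d\geq3$, Theorem~\ref{thm:Hesse} gives an invertible real linear change of variables $X=TY$ with $Q(TY)=Q_0(Y_1,Y_2)$. Undoing $L$, we get $P_d(x)=Q_0(\ell_1(x),\ell_2(x))$ for two independent linear forms $\ell_1,\ell_2$. Let $\kappa\neq0$ span their common kernel. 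Then $P_d$ is independent of the direction $\kappa$. Moreover $Q_0$ is a harmonic polynomial in two variables with respect to the induced constant-coefficient elliptic operator, since $\Delta Q=0$ and $Q$ is constant along $T e_3$. After a further real linear change in $(Y_1,Y_2)$, this operator becomes the Laplacian, so $Q_0=\operatorname{Re}(c\,\zeta^d)$ with $\zeta=Y_1+iY_2$ and $c\neq0$.

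Next we write the gradient map in coordinates adapted to Lemma~\ref{lem:sheetorder}. Choose domain coordinates $(\xi,\eta,t)$ with $\xi+i\eta=\zeta$ (pulled back linearly to $x$) and $t$ a third coordinate along $\kappa$. Then
\[
        DP_d(x)=A\,\bigl(\partial_\xi Q_0,\partial_\eta Q_0\bigr)^T
\]
for a fixed $3\times2$ matrix $A$ of rank two, whose columns are the pulled-back differentials of $\ell_1,\ell_2$. Since $\partial_\xi Q_0-i\partial_\eta Q_0=d\,c\,\zeta^{d-1}$, the pair $(\partial_\xi Q_0,-\partial_\eta Q_0)$ is the real form of $d c\zeta^{d-1}$. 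Now $Du(x)-\gamma e_3=DP_d(x)+DR(x)$. Apply an invertible affine change in the target: subtract $\gamma e_3$, then map the range of $A$ onto the first two coordinates (with the conjugation sign absorbed) and a complement onto the third. This yields $F=(F_1,F_2,F_3)$ with
\[
        P=F_1+iF_2=dc\,\zeta^{m}+\widetilde R,\qquad m=d-1\geq2,
\]
where $\widetilde R$ is a linear combination of components of $DR$. The expansion of $u$ in the $C^2$ sense gives $|\widetilde R|=O(|x|^{m+1})$ and $|D\widetilde R|=O(|x|^{m})$, which is exactly \eqref{estiR}. The possible antiholomorphic form $\bar\zeta^m$ is handled by composing with a reflection of the $\eta$-coordinate.

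To conclude, $Du$ is a local homeomorphism, so it is one-to-one on a small ball around $0$. Hence $F$, which differs from $Du$ only by invertible affine maps, is one-to-one near $0$. Lemma~\ref{lem:sheetorder} with $m\geq2$ contradicts this. Therefore $D^2u(x_0)=0$ is impossible.

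The main obstacle is the bookkeeping in the second step. We must check that the leading term of $Du$ really has the form $c\zeta^m$ after linear changes in both domain and target, namely that the two nonconstant components of $DP_d$ combine holomorphically. This rests on $Q_0$ being harmonic in two variables, so we have to verify that the elliptic operator $\partial_{11}+\partial_{22}+(p-1)\partial_{33}$ restricts, after the change of variables, to a two-dimensional operator that can be normalized to the Laplacian. We also need $d\geq3$ to guarantee $m\geq2$, and we must confirm that the error $DR$ is genuinely of higher order in the $C^1$ sense in the rescaled variables.
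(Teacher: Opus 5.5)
Your proposal is correct and follows the paper's proof essentially step for step. Lemma~\ref{lem:rankzeroblowup} together with Theorem~\ref{thm:Hesse} reduces $Q$ to a two-variable polynomial, and positive definiteness of the restricted operator normalizes it to $\operatorname{Re}(c\zeta^d)$. After linear changes in domain and target, the leading part of $F_1+iF_2$ is $dc\,\zeta^{d-1}$ with a remainder satisfying \eqref{estiR}, which contradicts Lemma~\ref{lem:sheetorder}. Your explicit target map sending the columns of $A$ to $e_1,-e_2$, and your derivation of \eqref{estiR} from the $C^2$ expansion, only spell out bookkeeping that the paper compresses into a single sentence.
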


\begin{proof}
Assume that $D^2u(0)=0$ and use the notation above. By Lemma
\ref{lem:rankzeroblowup},
\[
        \det D^2Q\equiv0.
\]
The ternary zero-Hessian theorem, Theorem \ref{thm:Hesse}, implies that
after an invertible real linear change of variables,
\[
        Q(X)=Q_0(X_1,X_2).
\]
Since $Q$ is harmonic, $Q_0$ solves a constant-coefficient elliptic equation
in two variables. More explicitly, after the real linear change which makes
$Q$ independent of the third variable, the Euclidean Laplacian becomes
\[
        g^{ij}\partial_{ij}
\]
with $(g^{ij})$ positive definite, and the restriction to the first two
variables remains positive definite. A further real linear change in the
$(X_1,X_2)$ variables therefore reduces this equation to the two-dimensional
Laplace equation. Hence, because $Q_0$ is homogeneous,
\[
        Q_0(X_1,X_2)
        =
        \operatorname{Re}\left(c(X_1+iX_2)^d\right)
\]
for some $c\neq0$.

Returning to the original variables only composes the leading gradient
$DP_d$ with invertible linear maps in the domain and target. Therefore the
gradient map
\[
        Du(x)-Du(0)
\]
has, after invertible linear changes, the form
\[
        \left(
        \operatorname{Re} c_1(\xi+i\eta)^{d-1},
        -\operatorname{Im} c_1(\xi+i\eta)^{d-1},
        0
        \right)
        +
        O\left(|(\xi,\eta,t)|^d\right),
\]
with $c_1\neq0$. Here
\[
        d-1\geq2.
\]
Since $u$ is analytic, the last expansion is an expansion in the $C^1$
sense; equivalently, the remainder in its first two components satisfies
the derivative bound required in Lemma \ref{lem:sheetorder}. The sign in
the second component is absorbed by the permitted reflection
$\eta\mapsto-\eta$.
This contradicts Lemma \ref{lem:sheetorder}, which proves the mapping $Du$ cannot be one to one.
\end{proof}

\section{Proof of the main theorem}

\begin{proof}[Proof of Theorem \ref{thm:main}]

Suppose that $\det D^2u(x_0)=0$ for some $x_0\in\Omega$. Then
\[
        \rank D^2u(x_0)\in\{0,1,2\}.
\]
Rank one is excluded by Proposition \ref{prop:rankone}. Rank two is
excluded by Proposition \ref{prop:ranktwo}. Rank zero is excluded by
Proposition \ref{prop:rankzero}. Therefore no such $x_0$ exists, and
\[
        \det D^2u(x)\neq0
        \qquad x\in\Omega.
\]
The inverse function theorem gives that $Du$ is a local diffeomorphism. If
$Du$ is globally one-to-one onto its image, the local inverse branches agree
and $Du$ is a diffeomorphism onto $Du(\Omega)$.
\end{proof}

\small
\bibliographystyle{plain}
\bibliography{reference}

{\small
\noindent
(Jiahuan Li) School of Mathematical Sciences, University of Science and
Technology of China, Hefei, 230026, Anhui Province, P.R. China.\\
Email address: \href{mailto:jiahuan@mail.ustc.edu.cn}{jiahuan@mail.ustc.edu.cn}\par\medskip
\noindent
(Yilu Liu) School of Mathematical Sciences, University of Science and
Technology of China, Hefei, 230026, Anhui Province, P.R. China.\\
Email address: \href{mailto:liuylgeoanaly@mail.ustc.edu.cn}{liuylgeoanaly@mail.ustc.edu.cn}\par\medskip
\noindent
(Xi-Nan Ma) School of Mathematical Sciences, University of Science and
Technology of China, Hefei, 230026, Anhui Province, P.R. China.\\
Email address: \href{mailto:xinan@ustc.edu.cn}{xinan@ustc.edu.cn}
}

\end{document}